\newtheorem{theorem}{Theorem}[section]
\newtheorem{prop}[theorem]{Proposition}
\newtheorem{lemma}[theorem]{Lemma}
\newtheorem{corollary}[theorem]{Corollary}
\def\LTR{left-to-right }
\newcommand\beq{\begin{equation}}
\newcommand\eeq{\end{equation}}
\def\P{\mathcal P}
\def\S{\mathcal S}
\def\I{\mathcal I}
\def\D{\mathcal D}
\def\G{\mathcal G}
\DeclareMathOperator\asc{asc}
\DeclareMathOperator\Asc{Asc}
\DeclareMathOperator\des{des}
\DeclareMathOperator\Des{Des}
\DeclareMathOperator\maj{maj}
\DeclareMathOperator\comaj{comaj}
\DeclareMathOperator\hd{HD}
\DeclareMathOperator\Peak{Peak}
\def\fn2{\lfloor \frac{n}{2} \rfloor}
\def\cn2{\lceil \frac{n}{2} \rceil}
\def\AR{\rho}
\def\hookpeak{\psi}
\def\x{\mathbf{x}}
\def\urcorner{\left(\cn2,\fn2\right)}
\DeclareMathOperator\SYT{SYT}
\begin{document}

\author{Marilena Barnabei \\
Dipartimento di Matematica \\
Bologna, 40126, ITALY \\
\texttt{marilena.barnabei@unibo.it}\and
Flavio Bonetti \\
Dipartimento di Matematica \\
Bologna, 40126, ITALY \\
\texttt{flavio.bonetti@unibo.it}\and
Sergi Elizalde\thanks{Corresponding author. Phone: +1-603-646-8191.} \\
Department of Mathematics\\
Dartmouth College \\
Hanover, NH 03755, USA \\
\texttt{sergi.elizalde@dartmouth.edu}
\and
Matteo Silimbani \\
Dipartimento di Matematica \\
Bologna, 40126, ITALY \\
\texttt{matteo.silimbani4@unibo.it}}

\date{}

\title{Descent sets on $321$-avoiding involutions and hook decompositions of partitions}

\maketitle

\begin{abstract}
We show that the distribution of the major index over the set of involutions in $\S_n$ that avoid the pattern $321$ is given
by the $q$-analogue of the $n$-th central binomial coefficient. The proof consists of a composition of three non-trivial bijections, one being the Robinson-Schensted correspondence, ultimately mapping those
involutions
with major index $m$ into partitions of $m$ whose Young diagram fits inside a $\fn2 \times \cn2$ box. We also obtain a refinement that keeps track of the descent set, and we deduce an analogous result for the comajor index of $123$-avoiding involutions.
\end{abstract}

\noindent {\bf Keywords:} restricted involution, descent, major index, integer partition, lattice path.

\noindent {\bf MSC2010:} 05A05, 05A17 (primary); 05A15 (secondary).

\section{Introduction}

The study of statistics on pattern-avoiding permutations is an active area of research. In one of the first papers in this area, Robertson, Saracino and Zeilberger~\cite{RSZ} considered the number of fixed points and excedances
in permutations avoiding patterns of length 3, which sparked further work on these statistics by several authors~\cite{BloSar1,Eli,EliPak,EliDeu,Eli04}. More recently, other statistics such as the number of descents~\cite{BBS3,BBS4}, the major index and the number of inversions~\cite{SagSav,DDJSS,CEKS} have been studied on restricted permutations. Many of these papers show that certain statistics have the same distribution on permutations avoiding
different patterns, and in some cases they give this distribution.

There has also been a significant amount of work on pattern-avoiding involutions. Recall that an involution is a permutation that equals its inverse.
In one of the most cited papers on pattern avoidance, Simion and Schmidt~\cite{SS} count involutions avoiding each pattern of length $3$.
Other more recent papers consider various statistics on pattern-avoiding involutions~\cite{BMSte,DRS,BloSar2}. The present paper focuses on the descent number and major index statistic on $321$-avoiding involutions, and more generally on the distribution of the descent set.
Similar problems on unrestricted involutions have been well studied~\cite{Strehl,Guoz,Chow,BBS1}.

One novelty of our work is that we find a surprising connection between pattern-avoiding involutions and integer partitions.
Our main result is that descent sets on $321$-avoiding involutions have the same distribution as certain hook lengths on partitions whose Young diagram fits inside a box.
In particular, the major index statistic translates to the area of the Young diagram. We obtain a bijective proof by composing three non-trivial statistic-preserving bijections, first going from permutations to lattice paths and then to partitions. One peculiarity of our main result is that when the length of the permutation is large enough in comparison with the largest descent,
one can give a much simpler proof (discussed in Section~\ref{sec:largen}), which does not seem to extend to all cases.

In Section~\ref{sec:paths} we introduce some background on lattice paths, as well as one of the three pieces of the main bijection.
In Section~\ref{sec:321involutions} we state and proof the main results about descents and major index on $321$-avoiding involutions, presenting the two remaining pieces of the bijection, one of which involves the Robinson-Schensted correspondence and has been used in~\cite{EliPak}, and the other one which is new to the best of our knowledge.

In Section~\ref{sec:consequences} we discuss some consequences and extensions. We show that, using ideas from~\cite{Strehl}, our results extend to the ascent distribution on $123$-avoiding involutions. We also consider descents on involutions avoiding two patterns of length $3$. Finally, in Section~\ref{sec:321permutations} we turn to the larger set of all $321$-avoiding permutations, and we obtain formulas enumerating those with a given descent set.

Finally, Section~\ref{sec:Stanley} discusses an alternate proof of our result about the distribution of the major index on $321$-avoiding involutions. This proof uses symmetric functions, and it is not bijective, unlike the one provided in Section~\ref{sec:maj}.

\section{Lattice paths}\label{sec:paths}

An important tool in our study of pattern-avoiding involutions will be lattice paths. In this section we define the paths that we will use and we give some background.
Unless explicitly stated otherwise, all the paths in this paper are lattice paths with steps $N=(0,1)$ and $E=(1,0)$ starting at the origin $(0,0)$. The length of a path is its number of steps.

A {\em Dyck path} is a path ending on the line $y=x$ and not going below $y=x$. Denote by $\D_n$ the set of Dyck paths of length $2n$.
A {\em Dyck path prefix} (sometimes called {\em ballot path}) is a path not going below $y=x$. We denote by $\P_n$ the set of Dyck path prefixes of length $n$.
A {\em Grand Dyck path} of length $n$ is a path ending at $\urcorner$. We denote by $\G_n$ the set of Grand Dyck paths of length $n$. Note that this definition is more general than the standard one, which considers only Grand Dyck paths with an even number of steps.

A {\em peak} in a path is an occurrence of $NE$, which we sometimes identify with the vertex in the middle of such an occurrence.
If we label the vertices of path $P\in\P_n$ or $P\in\G_n$ from $0$ to $n$ starting at the origin, the {\em peak set} of $P$, denoted $\Peak(P)$, is the set of labels of the vertices that are peaks. For example, the peak set of both paths in Figure~\ref{fig:xi} is $\{2,6,9,14\}$.

Next we describe a bijection $\xi$ between $\P_n$ and $\G_n$, which belongs to mathematical folklore. A very similar construction was used by Greene and Kleitman~\cite{GreKle}
to give a symmetric chain decomposition of the boolean algebra, and also more recently by Elizalde and Rubey~\cite{EliRub} in the context of lattice paths.

Given $P\in\P_n$, match $N$s and $E$s that face each other, in the sense that the line segment (called a {\em tunnel} in~\cite{Eli}) from the midpoint of $N$ to the midpoint of $E$ has slope $1$ and stays below the path. Figure~\ref{fig:xi} shows an example. Thinking of the $N$s as opening parentheses and the $E$s as closing parentheses, the matched parentheses properly close each other.
Let $j$ be the number of unmatched steps, which are necessarily $N$ steps, since $P\in\P_n$. Note that $j$ and $n$ have the same parity. To obtain $\xi(P)$, change the first $\lceil\frac{j}{2}\rceil$ unmatched $N$ steps into $E$ steps.

It is clear that $\xi(P)\in\G_n$, since it has $\cn2$ $E$ steps and $\fn2$ $N$ steps. The inverse map is obtained again by matching $N$s and $E$s that face each other in the Grand Dyck path, and then changing all the unmatched $E$s (which necessarily come before the unmatched $N$s) into $N$s.

\begin{figure}[htb]
  \begin{center}
    \begin{tikzpicture}[scale=0.55]
      \draw (0,0) coordinate(d0)
      -- ++(0,1) coordinate(d1)
      -- ++(0,1) coordinate(d2)
      -- ++(1,0) coordinate(d3)
      -- ++(1,0) coordinate(d4)
      -- ++(0,1) coordinate(d5)
      -- ++(0,1) coordinate(d6)
      -- ++(1,0)  coordinate(d7)
      -- ++(0,1) coordinate(d8)
      -- ++(0,1) coordinate(d9)
      -- ++(1,0)  coordinate(d10)
      -- ++(1,0)  coordinate(d11)
      -- ++(0,1) coordinate(d12)
      -- ++(0,1) coordinate(d13)
      -- ++(0,1) coordinate(d14)
      -- ++(1,0)  coordinate(d15)
      -- ++(0,1) coordinate(d16);
      \foreach \x in {0,...,16} {
        \draw (d\x) circle (1.5pt);
      }
      \draw[dotted] (0,0)--(8,8);
      \draw[dashed] (0,0.5)--(1.5,2);
      \draw[dashed] (0,1.5)--(0.5,2);
      \draw[dashed] (2,3.5)--(2.5,4);
      \draw[dashed] (3,4.5)--(4.5,6);
      \draw[dashed] (3,5.5)--(3.5,6);
      \draw[dashed] (5,8.5)--(5.5,9);
      \draw[red,ultra thick] (d4)--(d5);
      \draw[red,ultra thick] (d11)--(d12);
      \draw[blue] (d12)--(d13);
      \draw[blue] (d15)--(d16);
      \draw(d2) node[left] {$2$};
      \draw(d6) node[left] {$6$};
      \draw(d9) node[left] {$9$};
      \draw(d14) node[left] {$14$};
      \draw (9,4) node [label=$\xi$] {$\mapsto$};
       \draw (11,0) coordinate(d0)
      -- ++(0,1) coordinate(d1)
      -- ++(0,1) coordinate(d2)
      -- ++(1,0) coordinate(d3)
      -- ++(1,0) coordinate(d4)
      -- ++(1,0) coordinate(d5) %
      -- ++(0,1) coordinate(d6)
      -- ++(1,0)  coordinate(d7)
      -- ++(0,1) coordinate(d8)
      -- ++(0,1) coordinate(d9)
      -- ++(1,0)  coordinate(d10)
      -- ++(1,0)  coordinate(d11)
      -- ++(1,0) coordinate(d12) %
      -- ++(0,1) coordinate(d13)
      -- ++(0,1) coordinate(d14)
      -- ++(1,0)  coordinate(d15)
      -- ++(0,1) coordinate(d16);
      \foreach \x in {0,...,16} {
        \draw (d\x) circle (1.5pt);
      }
      \draw[dotted] (11,0)--(19,8);
      \draw[dashed] (11,0.5)--(12.5,2);
      \draw[dashed] (11,1.5)--(11.5,2);
      \draw[dashed] (14,2.5)--(14.5,3);
      \draw[dashed] (15,3.5)--(16.5,5);
      \draw[dashed] (15,4.5)--(15.5,5);
      \draw[dashed] (18,6.5)--(18.5,7);
      \draw[red,ultra thick] (d4)--(d5);
      \draw[red,ultra thick] (d11)--(d12);
      \draw[blue] (d12)--(d13);
      \draw[blue] (d15)--(d16);
      \draw(d2) node[left] {$2$};
      \draw(d6) node[left] {$6$};
      \draw(d9) node[left] {$9$};
      \draw(d14) node[left] {$14$};
    \end{tikzpicture}
  \end{center}
  \caption{The bijection $\xi:\P_n\to\G_n$. The unmatched steps changed by $\xi$ are thicker and colored in red. The labels indicate the positions of the peaks, which are preserved by the bijection.}\label{fig:xi}
\end{figure}
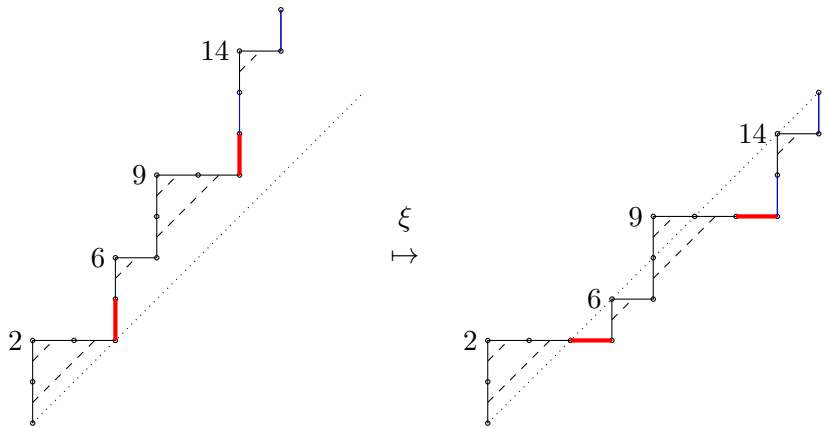

\begin{lemma}\label{lem:peaksPG}
For every $P\in\P_n$, we have
$$\Peak(P)=\Peak(\xi(P)).$$
\end{lemma}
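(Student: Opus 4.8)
The plan is to establish the set equality $\Peak(P)=\Peak(\xi(P))$ one vertex at a time, by tracking precisely which steps are altered by $\xi$ and how this affects occurrences of the pattern $NE$. Write $s_1,\dots,s_n$ for the steps of $P$, so that $s_i$ is the step from vertex $i-1$ to vertex $i$, and $s'_1,\dots,s'_n$ for the steps of $\xi(P)$; then a vertex $v$ with $1\le v\le n-1$ lies in $\Peak(P)$ exactly when $s_v=N$ and $s_{v+1}=E$, and likewise for $\xi(P)$. By construction $s'_i=s_i$ except at the first $\lceil\frac{j}{2}\rceil$ unmatched $N$ steps, each of which is turned into an $E$. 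I would fix $v$ and compare $(s_v,s_{v+1})$ with $(s'_v,s'_{v+1})$, according to which of $v,v+1$ are changed steps.

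Two structural features of the matching drive the argument. First, the step immediately following an unmatched $N$ is never an $E$, since such an $E$ would be matched to that very $N$; hence a changed step is never the left end of an $NE$ in $P$. Second, if the step immediately preceding an unmatched $N$ is also an $N$, then that preceding step is itself unmatched: were it a matched $N$, the unmatched $N$ would lie strictly inside its matched pair and would be forced to match as well. With these in hand, two configurations are immediate. If $v$ is a changed step, then $s_v=N$ together with the first observation gives $s_{v+1}=N$, so $v\notin\Peak(P)$, while $s'_v=E$ gives $v\notin\Peak(\xi(P))$. If neither $v$ nor $v+1$ is changed, then $(s_v,s_{v+1})=(s'_v,s'_{v+1})$ and there is nothing to prove.

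The remaining configuration, $v$ unchanged but $v+1$ a changed step, is the delicate one and is where I expect the real work to lie. Here $s_{v+1}=N$ gives $v\notin\Peak(P)$, whereas $s'_{v+1}=E$, so $v\in\Peak(\xi(P))$ would hold precisely when $s_v=N$; the danger is a spurious peak created by the change. This is exactly the point at which altering the \emph{first} $\lceil\frac{j}{2}\rceil$ unmatched $N$ steps, rather than an arbitrary set of them, is essential. If $s_v=N$, the second observation shows that $s_v$ is an unmatched $N$ sitting immediately before the unmatched $N$ at position $v+1$, hence its immediate predecessor in the left-to-right list of unmatched $N$ steps; since $v+1$ is among the first $\lceil\frac{j}{2}\rceil$ of them, so is $v$, forcing $v$ to be a changed step and contradicting our assumption. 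Therefore $s_v=E$, no peak is created, and the two peak conditions again agree. Collecting the cases yields $v\in\Peak(P)\iff v\in\Peak(\xi(P))$ for all $v$, which is the lemma.
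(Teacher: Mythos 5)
Your proof is correct. The forward direction (no peak of $P$ is destroyed) rests on the same observation as the paper's one-line proof, namely that the two steps of a peak $NE$ are matched to each other and hence untouched by $\xi$; your first structural feature is just the contrapositive of this. Where you genuinely diverge is in ruling out \emph{new} peaks: the paper simply runs the same matched-pair argument on $\xi(P)$ via $\xi^{-1}$, which implicitly requires knowing how the matching of $\xi(P)$ relates to that of $P$ and that the two maps are inverse, whereas you verify directly in $P$ that a spurious peak cannot arise. Your key point --- that an $N$ immediately preceding an unmatched $N$ is itself unmatched (by the nesting of matched pairs), so consecutive unmatched $N$s are consecutive in the left-to-right list and the earlier one is also among the first $\lceil j/2\rceil$ changed steps --- is exactly the place where the definition of $\xi$ as changing a \emph{prefix} of the unmatched steps is used, and your case analysis correctly isolates this as the only delicate configuration. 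The trade-off is length versus self-containedness: the paper's proof is shorter but leans on the symmetry between $\xi$ and $\xi^{-1}$; yours is longer but needs no information about $\xi(P)$ beyond which steps were flipped.
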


\begin{proof}
For each peak $NE$ in a Dyck path prefix or Grand Dyck path, the steps $N$ and $E$ forming the peak are matched to each other, so peaks remain unchanged when applying $\xi$ or $\xi^{-1}$.
\end{proof}

\section{$321$-avoiding involutions}\label{sec:321involutions}

This section contains the main results of the paper, which concern statistics on $321$-avoiding involutions.

Let $\S_n$ (resp. $\I_n$) denote the set of permutations (resp. involutions) of $\{1,2,\dots,n\}$.
Recall that a permutation $\pi$ is an involution if $\pi=\pi^{-1}$.
A permutation $\pi(1)\dots\pi(n)$ is $321$-avoiding if there exist no $i<j<k$ such that $\pi(i)>\pi(j)>\pi(k)$.
Denote by $\S_n(321)$ (resp. $\I_n(321)$) the set of $321$-avoiding permutations (resp. involutions) in $\S_n$.

We say that a permutation $\pi$ has a \emph{descent} at position $i$, where $1\le i<n$, if $\pi(i)>\pi(i+1)$. Otherwise, we say that $\pi$ has an \emph{ascent} at that position. The set of descent positions of $\pi$ will be denoted by $\Des(\pi)$, while $\Asc(\pi)$ will denote the set of ascent positions. Moreover, we denote by $\des(\pi)$ and $\asc(\pi)$ the cardinalities of $\Des(\pi)$ and $\Asc(\pi)$, respectively. The sum of the entries in $\Des(\pi)$ is called the \emph{major index} of $\pi$:
$$\maj(\pi)=\sum_{i\in \Des(\pi)}i.$$
Similarly, the \emph{comajor index} of $\pi$ is the sum
$$\comaj(\pi)=\sum_{i\in \Asc(\pi)}i.$$

\subsection{Number of descents}\label{sec:des}

Our first goal is to give the distribution of the number of descents on $321$-avoiding involutions.
We start by describing a bijection $\AR$ between $\I_n(321)$ and $\P_n$ which, without the restriction to involutions, appears in~\cite[Section~3]{EliPak}, in~\cite{AdiRoi}, and in a similar form in~\cite[p. 64]{Knu}.

Given $\pi\in\I_n(321)$, we first apply
the Robinson-Schensted algorithm (see \cite[Section~3.1]{Sag}) to obtain a pair of
\emph{standard Young tableaux} of the same shape. By the symmetry of this algorithm (see~\cite{Schu}), the fact that $\pi=\pi^{-1}$
translates into the fact that these two tableaux are identical. Denote the resulting tableau by $Q$. Since $\pi$ avoids $321$, this tableau has at most two rows (by {\it Schensted's Theorem} \cite{Sche}, the number of rows equals the length of the longest decreasing subsequence of $\pi$). Thus, the Robinson-Schensted algorithm gives a bijection $\pi\mapsto Q$ between $\I_n(321)$ and the set of standard Young tableaux with $n$ boxes and at most 2 rows.

The tableau $Q$ can be interpreted as a Dyck path prefix, by letting the entries in the first row determine the positions of the $N$ steps, and the entries in the second row determine the positions of the $E$ steps.
Define $\AR(\pi)$ to be this Dyck path prefix. Figure~\ref{fig:AR} shows an example of the bijection $\AR$.

\begin{figure}[htb]
\newcommand\ten{10}
\newcommand\eleven{11}
\newcommand\twelve{12}
  \begin{center}
    \begin{tikzpicture}[scale=0.6]
      \draw (-3.5,2.5) node[left] {$3\,4\,1\,2\,7\,9\,5\,10\,6\,8\,11\,12\quad \stackrel{\text{R-S}}{\mapsto}$};
      \draw (-3,2.2) node[right] {$\young(12568\eleven\twelve,3479\ten)$};
      \draw (4,2.2) node[left] {$\mapsto$};
      \draw (5,0) coordinate(d0)
      -- ++(0,1) coordinate(d1)
      -- ++(0,1) coordinate(d2)
      -- ++(1,0) coordinate(d3)
      -- ++(1,0) coordinate(d4)
      -- ++(0,1) coordinate(d5)
      -- ++(0,1) coordinate(d6)
      -- ++(1,0)  coordinate(d7)
      -- ++(0,1) coordinate(d8)
      -- ++(1,0) coordinate(d9)
      -- ++(1,0)  coordinate(d10)
      -- ++(0,1)  coordinate(d11)
      -- ++(0,1) coordinate(d12);
      \foreach \x in {0,...,12} {
        \draw (d\x) circle (1.5pt);
      }
      \draw[dotted] (5,0)--(11,6);
    \end{tikzpicture}
  \end{center}
  \caption{The bijection $\AR:\I_n(321)\to\P_n$. }\label{fig:AR}
\end{figure}

We now show that the distribution of the descent set on $321$-avoiding involutions is the same as the distribution of the peak set on Dyck path prefixes.

\begin{lemma}\label{lem:DesPeak}
For every $\pi\in\I_n(321)$, we have $$\Des(\pi)=\Peak(\AR(\pi)).$$
\end{lemma}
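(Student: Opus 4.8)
The plan is to reduce the identity $\Des(\pi)=\Peak(\AR(\pi))$ to a statement purely about the recording tableau $Q$, and then to read that statement off the Robinson--Schensted insertion, invoking $321$-avoidance only at the single step where it is genuinely needed. First I would reformulate the right-hand side. By the construction of $\AR$, the $j$-th step of $\AR(\pi)$ is $N$ precisely when the entry $j$ lies in the first row of $Q$, and it is $E$ precisely when $j$ lies in the second row. Since a peak is an occurrence $NE$ labelled by its middle vertex, a vertex $j$ (with $1\le j\le n-1$) belongs to $\Peak(\AR(\pi))$ if and only if step $j$ is $N$ and step $j+1$ is $E$, that is, if and only if $j$ sits in the first row of $Q$ and $j+1$ sits in the second row. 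Thus the lemma is equivalent to the assertion that $\pi(j)>\pi(j+1)$ holds exactly when $j$ is in the first row and $j+1$ in the second row of $Q$.

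Next I would describe row membership in $Q$ in dynamical terms. For $0\le i\le n$, let $m_i$ denote the largest entry of the first row after inserting $\pi(1),\dots,\pi(i)$ (set $m_0=0$); since the first row is increasing, $m_i$ is just its last entry. Inserting $\pi(i)$ appends a new box to the first row exactly when $\pi(i)>m_{i-1}$, in which case $m_i=\pi(i)$ and $i$ is recorded in the first row of $Q$. Otherwise $\pi(i)<m_{i-1}$, the insertion bumps an entry down to the second row, $\pi(i)$ itself ends up in the first row so that $m_i\ge\pi(i)$, and $i$ is recorded in the second row.

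With this dichotomy I would settle the four possibilities for the rows of $j$ and $j+1$. If $j$ is in the first row and $j+1$ in the second, then $\pi(j)>m_{j-1}$ gives $m_j=\pi(j)$ and $\pi(j+1)<m_j=\pi(j)$, a descent. If both are in the first row, then $\pi(j+1)>m_j=\pi(j)$, an ascent. If $j$ is in the second row and $j+1$ in the first, then $\pi(j+1)>m_j\ge\pi(j)$, again an ascent. The remaining case, where both $j$ and $j+1$ lie in the second row, is the crux and the only place the pattern restriction enters. Here $\pi(j)<m_{j-1}$; suppose toward a contradiction that $\pi(j)>\pi(j+1)$. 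The value $m_{j-1}$ is a first-row entry present before step $j$, so $m_{j-1}=\pi(p)$ for some $p\le j-1$, and $m_{j-1}>\pi(j)$. Then $p<j<j+1$ with $\pi(p)=m_{j-1}>\pi(j)>\pi(j+1)$ is an occurrence of $321$ in $\pi$, contradicting $\pi\in\I_n(321)$. Hence this case is an ascent.

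Collecting the four cases, $\pi(j)>\pi(j+1)$ holds exactly when $j$ is in the first row and $j+1$ in the second row of $Q$, which by the first paragraph is exactly the condition $j\in\Peak(\AR(\pi))$, proving $\Des(\pi)=\Peak(\AR(\pi))$. I expect the only real obstacle to be the last case, where one must produce the forbidden pattern; the other three follow immediately from the append-versus-bump dichotomy. As a shortcut one could instead invoke the classical fact that $\Des(\pi)=\Des(Q)$ under Robinson--Schensted, together with the observation that for a two-row tableau $\Des(Q)$ consists precisely of those $j$ with $j$ in the first row and $j+1$ in the second; the direct argument above has the advantage of pinpointing exactly where $321$-avoidance is used.
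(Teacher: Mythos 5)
Your proposal is correct. The first reduction --- that $j\in\Peak(\AR(\pi))$ if and only if $j$ lies in the first row of $Q$ and $j+1$ in the second --- is exactly the paper's, but where the paper then simply cites the classical fact that $\Des(\pi)$ equals the descent set of the recording tableau (Sch\"utzenberger; \cite[Lemma 7.23.1]{EC2}), you reprove that fact from scratch in the two-row case by tracking the last entry $m_i$ of the first row and analyzing the four row-membership possibilities for $j$ and $j+1$. Your case analysis is sound, including the production of a $321$ occurrence from $\pi(p)=m_{j-1}>\pi(j)>\pi(j+1)$ in the both-in-second-row case (first-row entries are always values $\pi(p)$ inserted directly, never bumped in, so $p\le j-1$ is legitimate). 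Two small caveats on framing: your append-versus-bump dichotomy (``otherwise $i$ is recorded in the second row'') already relies on the tableau having at most two rows, i.e.\ on Schensted's theorem applied to the $321$-avoiding $\pi$, so avoidance is used implicitly in paragraph two and not only in the fourth case; and the statement you prove in that fourth case (equal rows force an ascent) is in fact true for arbitrary permutations via the general $\Des(\pi)=\Des(Q)$ result, so $321$-avoidance is needed for your particular argument rather than for the conclusion itself. What your route buys is a self-contained, citation-free proof that makes the mechanism visible; what the paper's route buys is brevity and a statement ($\Des(\pi)=\Des(Q)$) that holds without any pattern restriction and is reused elsewhere (e.g.\ in the proof of Theorem~\ref{thm:s321}).
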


\begin{proof}
Let $Q$ be the tableau obtained by applying the Robinson-Schensted algorithm to $\pi$. A property of this algorithm (see \cite[Remarque 2]{Schu} and \cite[Lemma 7.23.1]{EC2}) is that $\Des(\pi)$ equals the descent set of $Q$, that is,
the set of indices $i$ such that $i$ appears in the top row of $Q$ and $i+1$ appears in the bottom row.
This is equivalent to the $i$-th step of $\AR(\pi)$ being a $N$ step immediately followed by an $E$ step, namely, a peak.
\end{proof}

\begin{theorem}\label{thm:des}
For every $0\le k<n$,
$$|\{\pi\in\I_n(321):\des(\pi)=k\}|=\binom{\cn2}{k}\binom{\fn2}{k}.$$
\end{theorem}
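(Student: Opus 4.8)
The plan is to reduce the claim to a purely lattice-path count. By Lemma~\ref{lem:DesPeak}, the bijection $\AR$ sends $\pi\in\I_n(321)$ with $\des(\pi)=k$ to a Dyck path prefix $P\in\P_n$ with exactly $k$ peaks, and by Lemma~\ref{lem:peaksPG} the folklore bijection $\xi$ carries these to Grand Dyck paths $\xi(P)\in\G_n$ again with exactly $k$ peaks. Since both $\AR$ and $\xi$ are bijections preserving the peak set (hence its cardinality), I would conclude
$$|\{\pi\in\I_n(321):\des(\pi)=k\}|=|\{R\in\G_n:|\Peak(R)|=k\}|.$$
So it suffices to count Grand Dyck paths of length $n$ with exactly $k$ peaks.

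The advantage of passing to $\G_n$ is that its paths have no constraint about staying above the diagonal: a path in $\G_n$ is simply any lattice path with $\cn2$ $E$ steps and $\fn2$ $N$ steps. Thus I would count such paths by the number of peaks (occurrences of $NE$) directly. A standard way to do this is to mark the peaks. A peak is a maximal-ish pattern $NE$; having exactly $k$ peaks means the word in $\{N,E\}$ has exactly $k$ descents in the sense of $N$ immediately followed by $E$. I would count binary words with a prescribed number of $N$ steps, $E$ steps, and factors $NE$.

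The key computation is the following: I would show that the number of words with $a$ copies of $N$, $b$ copies of $E$, and exactly $k$ factors $NE$ equals $\binom{a}{k}\binom{b}{k}$. One clean way is to view the word as a shuffle: choosing $k$ peaks amounts to choosing which $N$ steps begin a peak and which $E$ steps end one, and then checking that the remaining steps can be arranged in exactly one way so that no additional $NE$ factors are created. Concretely, a word with exactly $k$ occurrences of $NE$ can be encoded by selecting $k$ of the $a$ positions among the $N$'s and $k$ of the $b$ positions among the $E$'s to be ``glued'' into peaks; the condition that there are no other descents forces the rest of the arrangement, giving a bijection with pairs of $k$-subsets. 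Setting $a=\fn2$ and $b=\cn2$ then yields $\binom{\fn2}{k}\binom{\cn2}{k}$, which is the desired product (the two binomials commute).

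The main obstacle is making the peak-counting bijection airtight, in particular verifying that choosing the $k$ gluing positions among the $N$'s and the $k$ among the $E$'s determines a \emph{unique} word with \emph{no extra} $NE$ factors, and that every such word arises this way. The subtlety is that after gluing $k$ chosen $N$'s to $k$ chosen $E$'s, the leftover $N$'s and $E$'s must be interleaved so that no new descent $NE$ is created, which means all remaining $E$'s must precede all remaining $N$'s relative to the glued blocks in a forced pattern; I would need to argue this interleaving is unique. An alternative that sidesteps this combinatorial care is to compute the peak-generating polynomial $\sum_{R\in\G_n} t^{|\Peak(R)|}$ via a transfer-matrix or generating-function argument and recognize the coefficient of $t^k$ as $\binom{\cn2}{k}\binom{\fn2}{k}$; I would keep this as a fallback if the direct bijection proves fiddly.
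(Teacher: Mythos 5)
Your proposal is correct and follows essentially the same route as the paper: reduce via Lemmas~\ref{lem:DesPeak} and~\ref{lem:peaksPG} to counting paths in $\G_n$ with exactly $k$ peaks, and then show that such a path corresponds to a free choice of two $k$-subsets. The ``fiddly'' gluing step you flag is handled cleanly in the paper by indexing peaks by their coordinates rather than by which letters are glued: a path in $\G_n$ with exactly $k$ peaks is uniquely determined by the peak vertices $(x_1,y_1),\dots,(x_k,y_k)$ with $x_1<\dots<x_k$ and $y_1<\dots<y_k$, where the $x$-coordinates form an arbitrary subset of $\{0,1,\dots,\cn2-1\}$ and the $y$-coordinates an arbitrary subset of $\{1,2,\dots,\fn2\}$, which makes both existence and uniqueness of the interleaving immediate.
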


\begin{proof}
By Lemmas~\ref{lem:DesPeak} and~\ref{lem:peaksPG}, the composition $\xi\circ\AR$ is a bijection between $\I_n(321)$ and $\G_n$ with the property that if $\pi\in\I_n(321)$ and $P=\xi(\AR(\pi))\in\G_n$, then
$\Des(\pi)=\Peak(P)$, and in particular $\des(\pi)=|\Peak(P)|$. Thus, it is enough to find the number of paths in $\G_n$ with $k$ peaks.
This number equals $$\binom{\cn2}{k}\binom{\fn2}{k},$$
since such a path is uniquely determined by the coordinates of its peaks $(x_1,y_1),\dots,(x_k,y_k)$ with $x_1<\dots<x_k$ and $y_1<\dots<y_k$, where
the $x$-coordinates are an arbitrary subset of $\{0,1,\dots,\cn2-1\}$ and the $y$-coordinates are an arbitrary subset of $\{1,2,\dots,\fn2\}$.
\end{proof}

\subsection{Major index and descent set}\label{sec:maj}

In this section we prove our main result. Its unrefined version states that the distribution of the major index over $321$-avoiding involutions is given by the central $q$-binomial coefficients. Recall that the $q$-binomial coefficients
are polynomials defined as $$\binom{n}{j}_q=\frac{(1-q^n)(1-q^{n-1})\dots(1-q^{n-j+1})}{(1-q^j)(1-q^{j-1})\dots(1-q)}.$$

\begin{theorem}\label{thm:maj}
For $n\ge1$,
\beq\label{eq:maj}\sum_{\pi\in\I_n(321)} q^{\maj(\pi)}=\binom{n}{\fn2}_q.\eeq
\end{theorem}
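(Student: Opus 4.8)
The plan is to leverage the bijection $\xi\circ\AR\colon\I_n(321)\to\G_n$ already established in the proof of Theorem~\ref{thm:des}, together with the identity $\Des(\pi)=\Peak(\xi(\AR(\pi)))$ coming from Lemmas~\ref{lem:DesPeak} and~\ref{lem:peaksPG}. Under this bijection, $\maj(\pi)=\sum_{i\in\Des(\pi)}i=\sum_{i\in\Peak(P)}i$, where $P=\xi(\AR(\pi))\in\G_n$. So the left-hand side of \eqref{eq:maj} equals $\sum_{P\in\G_n}q^{\sum_{i\in\Peak(P)}i}$, and the task reduces to a purely combinatorial statistic on Grand Dyck paths: the sum of the labels of the peak vertices. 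The goal is then to show
$$\sum_{P\in\G_n}q^{\sum_{i\in\Peak(P)}i}=\binom{n}{\fn2}_q.$$

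The main device I would use is the third bijection promised in the introduction, mapping Grand Dyck paths (equivalently, their peak data) to partitions fitting inside a $\fn2\times\cn2$ box, under which the peak-label sum becomes the area (number of boxes) of the Young diagram. This is the natural thing to aim for because the $q$-binomial $\binom{n}{\fn2}_q$ is the classical generating function $\sum_\lambda q^{|\lambda|}$ over partitions $\lambda$ whose diagram fits in an $\fn2\times\cn2$ rectangle; that identity is standard (the $q$-binomial counts lattice paths in a box weighted by area, or equivalently such partitions by size). Recall from Theorem~\ref{thm:des} that a path $P\in\G_n$ is determined by its peak coordinates $(x_1,y_1),\dots,(x_k,y_k)$ with $x_1<\dots<x_k$ drawn from $\{0,\dots,\cn2-1\}$ and $y_1<\dots<y_k$ drawn from $\{1,\dots,\fn2\}$. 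The label $i$ of the peak at $(x_j,y_j)$ is $i=x_j+y_j$ (since the vertex after $x_j$ east steps and $y_j$ north steps sits at position $x_j+y_j$ along the path), so $\sum_{i\in\Peak(P)}i=\sum_{j}(x_j+y_j)$.

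The core of the argument is therefore to define a weight-preserving bijection from the data of two strictly increasing sequences $\{x_j\}\subseteq\{0,\dots,\cn2-1\}$ and $\{y_j\}\subseteq\{1,\dots,\fn2\}$ of equal length, weighted by $\sum_j(x_j+y_j)$, to partitions in the $\fn2\times\cn2$ box weighted by area. The idea I would pursue: record the peaks as the inner corners of a staircase-type region and read off the complementary partition, so that the $x$-contribution and the $y$-contribution to $\sum_j(x_j+y_j)$ assemble into the cells of a single Young diagram. Concretely, one can encode the two subsets as a single partition via the standard trick of converting each increasing sequence into a partition (e.g.\ $x_j$ into $x_j-(j-1)$ and $y_j$ into $y_j-j$, or their complements within the box) and then gluing the row-parts and column-parts; verifying that this produces every partition in the box exactly once, and that the areas add up correctly to $\sum_j(x_j+y_j)$ up to a fixed shift, is the crux.

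The hard part will be this last bijection and its bookkeeping: unlike the descent-number count, where the two coordinate sets are chosen independently, here the area statistic couples the $x$'s and $y$'s, and one must check that the peaks of a Grand Dyck path (which are constrained to lie weakly below the antidiagonal pattern dictated by $\urcorner$) correspond precisely to partitions contained in the box with no over- or under-counting, and that the weight $\sum_j(x_j+y_j)$ matches $|\lambda|$ exactly rather than off by a constant. I would verify the correspondence carefully on the extreme cases (the empty path, giving the empty partition of area $0$ and contributing $q^0$; and the maximal-peak configuration, giving the full $\fn2\times\cn2$ rectangle) to pin down any normalization, and then confirm the general weight identity by tracking how moving a single peak by one lattice step changes both the peak-label sum and the partition area by $1$. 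Once the third bijection is in place and shown to send peak-label sum to area, the theorem follows immediately from the standard generating-function identity for partitions in a box.
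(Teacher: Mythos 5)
Your proposal is correct and follows essentially the same route as the paper: the same composition $\xi\circ\AR$ reduces the problem to Grand Dyck paths, and the third bijection you sketch --- encoding the peak coordinates $(x_j,y_j)$ as two partitions glued to a $k\times k$ Durfee square so that $\sum_j(x_j+y_j)$ becomes the area --- is exactly the paper's map $\hookpeak^{-1}$ of Lemma~\ref{lem:hookpeak}, where the peak labels $x_j+y_j$ are realized as the hook lengths of the partition and the bookkeeping you flag as the crux does close up (the shifts $\binom{k}{2}+\binom{k+1}{2}=k^2$ are absorbed by the Durfee square). The only difference is one of refinement: the paper proves the stronger Theorem~\ref{thm:main}, matching the entire descent set with the hook decomposition, and deduces Theorem~\ref{thm:maj} by summing over descent sets, whereas you carry only the area statistic.
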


It is well known~\cite[Chapter~6]{Sta} that the coefficient of $q^m$ in the central $q$-binomial coefficient on the right hand side of~\eqref{eq:maj} equals the number of partitions of $m$ whose Young diagram fits inside a $\fn2 \times \cn2$ box.
In the rest of the paper, we denote this box by $B_n$, and we place coordinates on it so that its lower-left corner is at the origin and its upper-right corner is at $\urcorner$.
We write $\lambda\vdash m$ to denote that $\lambda$ is a partition of $m$, and we write $\lambda\subseteq B_n$ to denote that the Young diagram of $\lambda$ fits inside $B_n$.
By looking at the lower-right boundary of their Young diagrams, partitions satisfying the above two conditions can be interpreted as Grand Dyck paths from $(0,0)$ to $\urcorner$ with steps $N$ and $E$ such that the area of the region in $\mathbb{R}^2$
 inside $B_n$ that lies above the path is $m$, as shown in Figure~\ref{fig:hd}.

We will prove a refinement of Theorem~\ref{thm:maj}, which we state as Theorem~\ref{thm:main} below.
Given a partition $\lambda\vdash m$, we define its \emph{hook decomposition} $\hd(\lambda)=\{i_1,i_2,\dots,i_k\}$ (always written such that $i_1<\dots<i_k$) as follows.
The number of entries $k$ is the length of the side of the Durfee square of $\lambda$, that is, the largest value such that $\lambda_k\ge k$. The largest entry $i_k$ is the number of boxes in the largest hook of $\lambda$, which consists of the first column and first row of its Young diagram. Now remove the largest hook of $\lambda$ and define $i_{k-1}$ to be the number of boxes in the largest hook of the remaining Young diagram. Similarly, the remaining entries $i_j$ are defined recursively by peeling off hooks in the Young diagram. See Figure~\ref{fig:hd} for an example. Note that $i_{j}-i_{j-1}>1$ for all $j$ by construction.

\begin{figure}[htb]
  \begin{center}
    \begin{tikzpicture}[scale=0.5]
      \draw[fill=yellow!70] (0,-5) rectangle (1,0);
      \draw[fill=yellow!70] (1,-1) rectangle (4,0);
      \draw[fill=blue!25] (1,-5) rectangle (2,-1);
      \draw[fill=blue!25] (2,-2) rectangle (4,-1);
      \draw[fill=green!30] (2,-4) rectangle (3,-2);
      \draw[gray,thin] (0,-6) grid (6,0);
      \draw (0,-5) grid (1,0);
      \draw (1,-1) grid (4,0);
      \draw (1,-5) grid (2,-1);
      \draw (2,-2) grid (4,-1);
      \draw (2,-4) grid (3,-2);
      \draw[blue,ultra thick] (0,-6) coordinate(d0)
      -- ++(0,1) coordinate(d1)
      -- ++(1,0) coordinate(d2)
      -- ++(1,0) coordinate(d3)
      -- ++(0,1) coordinate(d4)
      -- ++(1,0) coordinate(d5)
      -- ++(0,1) coordinate(d6)
      -- ++(0,1)  coordinate(d7)
      -- ++(1,0) coordinate(d8)
      -- ++(0,1) coordinate(d9)
      -- ++(0,1)  coordinate(d10)
      -- ++(1,0)  coordinate(d11)
      -- ++(1,0) coordinate(d12);
      \foreach \x in {0,...,12} {
        \draw[blue,fill] (d\x) circle (2pt);
      } 
   \end{tikzpicture}
  \end{center}
  \caption{The Young diagram of the partition $\lambda=(4,4,3,3,2)\vdash 16$ inside the box $B_{12}$, and its hook decomposition $\hd(\lambda)=\{2,6,8\}$. The lower-right boundary of the Young diagram determines a Grand Dyck path from $(0,0)$ to $(6,6)$, which is highlighted in blue.}\label{fig:hd}
\end{figure}
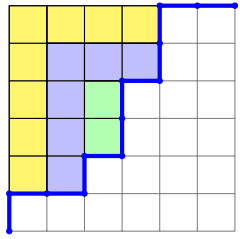

\begin{theorem}\label{thm:main}
Let $1\le i_1<i_2<\dots<i_k<n$, and let $m=i_1+\dots+i_k$.
There is a bijection
$$\{\pi\in\I_n(321):\Des(\pi)=\{i_1,i_2,...,i_k\}\}\longrightarrow
\{\lambda\vdash m:\hd(\lambda)=\{i_1,i_2,\dots,i_k\},\lambda\subseteq B_n\}.$$
\end{theorem}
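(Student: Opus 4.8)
The plan is to build the bijection as a composition, reusing the machinery already established for Theorem~\ref{thm:des}. We already have the bijection $\xi\circ\AR$ carrying $\I_n(321)$ to $\G_n$ with $\Des(\pi)=\Peak(P)$. Meanwhile, the discussion after Theorem~\ref{thm:maj} identifies partitions $\lambda\subseteq B_n$ with Grand Dyck paths in $\G_n$ (via the lower-right boundary of the Young diagram), and states that $\maj$ corresponds to the area above the path. So the natural strategy is: take $\pi\mapsto P=\xi(\AR(\pi))\in\G_n$, read $P$ as the boundary path of a partition $\lambda\subseteq B_n$, and verify that $\hd(\lambda)$ equals $\Peak(P)$, which by Lemmas~\ref{lem:DesPeak} and~\ref{lem:peaksPG} equals $\Des(\pi)$.

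\medskip

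**The key step — and the main obstacle — is the identity $\hd(\lambda)=\Peak(P)$,** where $P$ is the boundary path of $\lambda$. First I would fix conventions: label the vertices of $P\in\G_n$ from $0$ to $n$, so the peak set is a subset of $\{1,\dots,n-1\}$, matching the range $1\le i_1<\dots<i_k<n$ in the statement. The content of the claim is a purely combinatorial fact about a single partition inside a box: if one peels hooks off $\lambda$ one at a time (Durfee-square many of them), then the hook lengths one obtains coincide with the labels of the peaks (the $NE$ corners) of the boundary path. Here the number $k$ of peaks should equal the Durfee square side, and the $j$-th hook length (counted from the outside in) should equal the label $i_j$ of the $j$-th peak along the path.

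\medskip

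**To prove $\hd(\lambda)=\Peak(P)$ I would argue as follows.** A peak at vertex labeled $i$ is an occurrence $NE$; the vertex sits at a point $(x,y)$ of the path with $x+y=i$, and it corresponds to an inner corner of the Young diagram (the tip of some hook). Reading the path from the origin, the $\ell$-th peak corresponds to the box on the main diagonal in position $(\ell,\ell)$ of the diagram, i.e.\ to the $\ell$-th hook peeled off. The label of that peak, $x+y$, counts the $E$ steps plus the $N$ steps preceding the corner, which is exactly the arm length plus the leg length plus one of the corresponding diagonal box — that is, the size of its hook. Since the diagonal boxes are precisely those of the Durfee square, there are exactly $k$ peaks, matching the Durfee side, and the recursive peeling in the definition of $\hd$ reads off these hook lengths in increasing order, so $i_j=x_j+y_j$. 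The condition $i_j-i_{j-1}>1$ noted after the definition of $\hd$ matches the fact that distinct peaks of $P$ are at distance at least $2$ along the path. The one routine verification I would include is that this peak-to-diagonal-box correspondence is exactly the map sending $\G_n$ to $\{\lambda\subseteq B_n\}$, together with the book-keeping that the hook-length sum equals $\sum_j i_j=m$, matching both $\maj(\pi)$ and $|\lambda|$.

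\medskip

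**Finally I would assemble the bijection and check the two index conditions.** The map $\pi\mapsto\lambda$ is a bijection because $\xi\circ\AR$ and the boundary-path identification are both bijections. On the domain side $\Des(\pi)=\Peak(P)=\{i_1,\dots,i_k\}$, and on the target side $\hd(\lambda)=\Peak(P)=\{i_1,\dots,i_k\}$, so the bijection restricts to the stated fibers. The mass conditions $m=i_1+\dots+i_k$ hold on both sides: on the left $m=\maj(\pi)=\sum_{i\in\Des(\pi)}i$ by definition, and on the right $m=|\lambda|=\sum_j(\text{$j$-th hook length})=\sum_j i_j$. The hardest part is genuinely the hook-peak identity, and I expect the subtlety to lie in aligning the \emph{order} of the peeled hooks with the left-to-right order of the peaks, and in confirming the Durfee-square count — but once the diagonal-box interpretation of peaks is in hand, the rest is bookkeeping.
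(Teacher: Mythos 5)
Your overall architecture (compose $\xi\circ\AR$ with an identification of $\G_n$ with partitions inside $B_n$) is close to the paper's, but the key step on which everything rests is false: it is \emph{not} true that $\hd(\lambda)$ equals the peak set of the lower-right boundary path of $\lambda$. The peaks of the boundary path correspond to corners of the Young diagram (essentially the distinct part sizes), not to the diagonal boxes of the Durfee square, and the label $x+y$ of such a peak is not the hook length of a diagonal box. Concretely, for $\lambda=(4,4,3,3,2)\subseteq B_{12}$ (the paper's Figure~\ref{fig:hd}) the boundary path is $NEENENNENNEE$, whose peak set is $\{1,4,7,10\}$: four elements summing to $22$, whereas $\hd(\lambda)=\{2,6,8\}$ has three elements summing to $|\lambda|=16$. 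A minimal counterexample is $\lambda=(1)\subseteq B_2$, whose boundary path $EN$ has empty peak set while $\hd(\lambda)=\{1\}$. A quick sanity check already rules out your identity in general: the entries of $\hd(\lambda)$ always sum to $|\lambda|$, but the peak positions of the boundary path need not.

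This failure is precisely why the paper inserts a third nontrivial bijection $\hookpeak$ (Lemma~\ref{lem:hookpeak}) instead of the naive boundary-path identification. There, one splits the boundary path of $\lambda$ at the point $M=\bigl(k,\lfloor \tfrac{n}{2}\rfloor-k\bigr)$, records the positions $a_1<\dots<a_k$ of the $E$ steps before $M$ and $b_1<\dots<b_k$ of the $N$ steps after $M$ (these satisfy $a_j+b_j=i_j+1$, which is the correct form of your ``arm plus leg plus one'' intuition), and then builds a \emph{different} Grand Dyck path with peaks at $(b_j-1,a_j)$; only this rearranged path has peak set equal to $\hd(\lambda)$. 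Your final assembly and bookkeeping (that $\Des(\pi)=\Peak(\xi(\AR(\pi)))$ by Lemmas~\ref{lem:DesPeak} and~\ref{lem:peaksPG}, and that the sums match) are fine, but the bridge from $\G_n$ to partitions must be $\hookpeak^{-1}$, not the boundary-path map, so as written the proof has a genuine gap at its central claim.
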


Note that if $i_j-i_{j-1}=1$ for some $j$, then both sides in Theorem~\ref{thm:main} are empty sets, since two consecutive descents in a permutation would produce an occurrence of $321$.
For fixed $m$, taking the union over all subsets $\{i_1,\dots,i_k\}\subseteq[n-1]$ with $i_1+\dots+i_k=m$, Theorem~\ref{thm:main} gives a bijection
$$\{\pi\in\I_n(321):\maj(\pi)=m\}\longrightarrow
\{\lambda\vdash m:\lambda\subseteq B_n\},$$
so it implies Theorem~\ref{thm:maj}.

To prove Theorem~\ref{thm:main} we will use a sequence of bijections, as summarized in Figure~\ref{fig:bijections}.
The composition $\xi\circ\AR$ used in the proof of Theorem~\ref{thm:des} is not enough here, because it does not translate
the major index of the $321$-avoiding involution into the area above the Grand Dyck path. We will need an additional bijection mapping the statistic $\hd$ to $\Peak$, which we define next.

\begin{figure}
$$\begin{array}{ccccccc}
\I_n(321)& \stackrel{\AR}{\longrightarrow} & \P_n & \stackrel{\xi}{\longrightarrow} & \G_n & \stackrel{\hookpeak^{-1}}{\longrightarrow} & \{\lambda\subseteq B_n\}\\
\Des & \stackrel{\mathrm{Lem.~\ref{lem:DesPeak}}}{\leftrightarrow} & \Peak & \stackrel{\mathrm{Lem.~\ref{lem:peaksPG}}}{\leftrightarrow} & \Peak & \stackrel{\mathrm{Lem.~\ref{lem:hookpeak}}}{\leftrightarrow} & \hd
\end{array}$$
\caption{The statistic-preserving bijections used in the proof of Theorem~\ref{thm:main}.}\label{fig:bijections}
\end{figure}

\begin{lemma}\label{lem:hookpeak}
There is a bijection $\hookpeak$ from the set of partitions $\lambda$ inside $B_n$ to $\G_n$ such that, for all~$\lambda$,
$$\hd(\lambda)=\Peak(\hookpeak(\lambda)).$$
\end{lemma}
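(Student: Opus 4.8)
The plan is to describe $\hookpeak$ explicitly through the \emph{Frobenius (diagonal) coordinates} of a partition, which convert the recursive hook-peeling in the definition of $\hd$ into a transparent formula. Let $\lambda\subseteq B_n$ have Durfee square of side $k$, and for $1\le j\le k$ let $a_j=\lambda_j-j$ and $l_j=\lambda'_j-j$ be the arm and leg of the diagonal cell $(j,j)$ (here $\lambda'$ is the conjugate). Then $a_1>\dots>a_k\ge 0$ and $l_1>\dots>l_k\ge 0$. First I would record the standard fact that peeling off hooks, as in the definition of $\hd$, removes precisely the principal hooks of the cells $(1,1),(2,2),\dots,(k,k)$ in this order: removing the outer hook of $(1,1)$ leaves a partition whose diagonal cells are the former $(2,2),\dots,(k,k)$ with unchanged arms and legs. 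Consequently $\hd(\lambda)=\{a_j+l_j+1 : 1\le j\le k\}$, since the hook of $(j,j)$ has $a_j+l_j+1$ cells.

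Next I would define $\hookpeak(\lambda)$ to be the unique path in $\G_n$ whose peaks are the points $(a_j,\,l_j+1)$ for $1\le j\le k$. This is legitimate because, as established in the proof of Theorem~\ref{thm:des}, a path in $\G_n$ is determined by, and can be prescribed arbitrarily through, its peak coordinates $(x_1,y_1),\dots,(x_k,y_k)$ with $x_1<\dots<x_k$ in $\{0,\dots,\cn2-1\}$ and $y_1<\dots<y_k$ in $\{1,\dots,\fn2\}$. The containment $\lambda\subseteq B_n$ is exactly what guarantees these ranges: $a_1=\lambda_1-1\le\cn2-1$ forces $\{a_j\}\subseteq\{0,\dots,\cn2-1\}$, and $l_1=\lambda'_1-1\le\fn2-1$ forces $\{l_j+1\}\subseteq\{1,\dots,\fn2\}$. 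Listing the peaks in increasing order of $x$-coordinate also orders the $y$-coordinates increasingly, since $a_j$ and $l_j$ are both strictly monotone in $j$, so the data $(a_j,l_j+1)$ meets the hypotheses above.

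The verification of the stated identity is then a one-line label computation: the peak at $(a_j,l_j+1)$ carries vertex label $a_j+(l_j+1)=a_j+l_j+1$, so $\Peak(\hookpeak(\lambda))=\{a_j+l_j+1\}=\hd(\lambda)$. For bijectivity I would observe that both partitions $\lambda\subseteq B_n$ (via their Frobenius coordinates) and paths in $\G_n$ (via their peaks) are parametrized, for each $k\ge 0$, by a $k$-element $x$-set in $\{0,\dots,\cn2-1\}$ together with a $k$-element $y$-set in $\{1,\dots,\fn2\}$; the elementary map $(a_j,l_j)\mapsto(a_j,l_j+1)$ is a bijection between these two parametrizations, so $\hookpeak$ is a bijection whose inverse recovers $\lambda$ from its Frobenius coordinates.

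The main obstacle, and the only genuinely non-formal step, is establishing the correspondence between the recursively defined hook decomposition and the diagonal arm/leg data, together with checking that the box condition $\lambda\subseteq B_n$ matches the coordinate ranges of peaks in $\G_n$ exactly, with no off-by-one error in the $+1$ on the $y$-coordinate and with arm mapped to $x$ (range $\cn2$) rather than to $y$. It is worth emphasizing that $\hookpeak$ is \emph{not} the naive map sending $\lambda$ to its lower-right boundary path; the peaks of that boundary path are the outer corners of $\lambda$, whose number is the number of distinct part sizes rather than the Durfee side $k$, so that map would not realize $\hd$. Finally, I would note as a sanity check that $\sum_j(a_j+l_j+1)=|\lambda|$, reflecting that the principal hooks tile the Young diagram; this is the identity that ultimately turns $\maj$ into $m$ in Theorem~\ref{thm:main}.
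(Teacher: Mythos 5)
Your proof is correct and constructs the identical bijection as the paper, merely phrased in the language of Frobenius (arm/leg) coordinates rather than by splitting the boundary path of $\lambda$ at the outer corner of the Durfee square: your peaks $(a_j,l_j+1)$ coincide with the paper's peaks $(b_j-1,a_j)$ after reindexing. The identification of $\hd$ with the principal-hook lengths, the range checks against $B_n$, and the peak-parametrization of $\G_n$ all match the paper's argument.
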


\begin{proof}
Given a partition $\lambda\subseteq B_n$, suppose that its Durfee square has side $k$ and that $\hd(\lambda)=\{i_1,\dots,i_k\}$.
Let $G\in\G_n$ be the path given by the boundary of the Young diagram of $\lambda$. Splitting $G$ at the point $M=(k,\fn2-k)$ we can write it as a concatenation $G=AB$, where $A$ and $B$ have $\fn2$ and $\cn2$ steps, respectively.
Counting the steps of $A$ starting at the point $M$, suppose that the $E$ steps occur at positions $1\le a_1<a_2<\dots<a_k\le \fn2$ (an $E$ step incident with $M$ would be considered to be at position $1$).
Similarly, counting the steps of $B$ starting at $M$, suppose that the $N$ steps occur at positions $1\le b_1<b_2<\dots<b_k\le \cn2$.
For each $1\le j\le k$, the hook of $\lambda$ of length $i_j$ is delimited by the $E$ step of $A$ at position $a_j$ and the $N$ step of $B$ in position $b_j$, from where it follows that $a_j+b_j=i_j+1$.

Define $\hookpeak(\lambda)$ to be the unique path in $\G_n$ that has peaks at coordinates $(b_j-1,a_j)$ for $1\le j\le k$.
The elements of its peak set are then $a_j+b_j-1=i_j$ for $1\le j\le k$, so $\Peak(\hookpeak(\lambda))=\hd(\lambda)$ as claimed.
Figure~\ref{fig:hookpeak} shows an example of this construction.

The map $\hookpeak$ is clearly invertible, because given a path in $\G_n$, the coordinates of its peaks determine the positions of the $N$ and $E$ steps in the boundary of the Young diagram of the corresponding partition inside $B_n$.
\end{proof}

\begin{figure}[htb]
  \begin{center}
    \begin{tikzpicture}[scale=0.6]
      \draw[gray,thin] (0,0) grid (7,7);
      \draw[blue,very thick] (0,0) coordinate(d0)
      -- ++(0,1) coordinate(d1)
      -- ++(1,0) coordinate(d2)
      -- ++(0,1) coordinate(d3)
      -- ++(0,1) coordinate(d4)
      -- ++(1,0) coordinate(d5)
      -- ++(1,0) coordinate(d6)
      -- ++(0,1)  coordinate(d7)
      -- ++(0,1) coordinate(d8)
      -- ++(1,0) coordinate(d9)
      -- ++(0,1)  coordinate(d10)
      -- ++(1,0)  coordinate(d11)
      -- ++(1,0) coordinate(d12)
      -- ++(1,0) coordinate(d13)
      -- ++(0,1) coordinate(d14);
      \draw (d7) circle (2pt);
      \draw (d7) node[above left] {$M$};
      \draw[yellow,very thick] (d1)--(d2);
      \draw[yellow,very thick] (d4)--(d6);
      \draw[yellow,very thick] (d7)--(d8);
      \draw[yellow,very thick] (d9)--(d10);
      \draw[yellow,very thick] (d13)--(d14);
      \draw[red,dotted,very thick] (d1)--(d2);
      \draw[red,dotted,very thick] (d4)--(d6);
      \draw[red,dotted,very thick] (d7)--(d8);
      \draw[red,dotted,very thick] (d9)--(d10);
      \draw[red,dotted,very thick] (d13)--(d14);
      \draw[red] (d2) node[below left] {\small $6$};
      \draw[red] (d5) node[below left] {\small $3$};
      \draw[red] (d6) node[below left] {\small $2$};
      \draw[red] (d7) node[above right] {\small $1$};
      \draw[red] (d9) node[above right] {\small $3$};
      \draw[red] (d13) node[above right] {\small $7$};
      \draw (9,3) node [label=$\hookpeak$] {$\mapsto$};
      \draw[gray,thin] (12,0) grid (19,7);
      \draw[blue,very thick] (12,0) coordinate(d0)
      -- ++(0,1) coordinate(d1)
      -- ++(0,1) coordinate(d2)
      -- ++(1,0) coordinate(d3)
      -- ++(1,0) coordinate(d4)
      -- ++(0,1) coordinate(d5)
      -- ++(1,0) coordinate(d6)
      -- ++(1,0)  coordinate(d7)
      -- ++(1,0) coordinate(d8)
      -- ++(1,0) coordinate(d9)
      -- ++(0,1)  coordinate(d10)
      -- ++(0,1)  coordinate(d11)
      -- ++(0,1) coordinate(d12)
      -- ++(1,0) coordinate(d13)
      -- ++(0,1) coordinate(d14);
      \draw (d2) node[above left] {$2$};
      \draw (d5) node[above left] {$5$};
      \draw (d12) node[above left] {$12$};
      \draw[red] (12,2) node[left] {\small $2\to$};
      \draw[red] (12,3) node[left] {\small $3\to$};
      \draw[red] (12,6) node[left] {\small $6\to$};
      \draw[red] (12,0) node[below] {\small $\stackrel{\uparrow}{0}$};
      \draw[red] (14,0) node[below] {\small $\stackrel{\uparrow}{2}$};
      \draw[red] (18,0) node[below] {\small $\stackrel{\uparrow}{6}$};
    \end{tikzpicture}
  \end{center}
  \caption{The bijection $\hookpeak$. In the path on the left, the numbers are the positions $a_j$ and $b_j$, which are then used in the path on the right to determine the locations of the peaks.}\label{fig:hookpeak}
\end{figure}
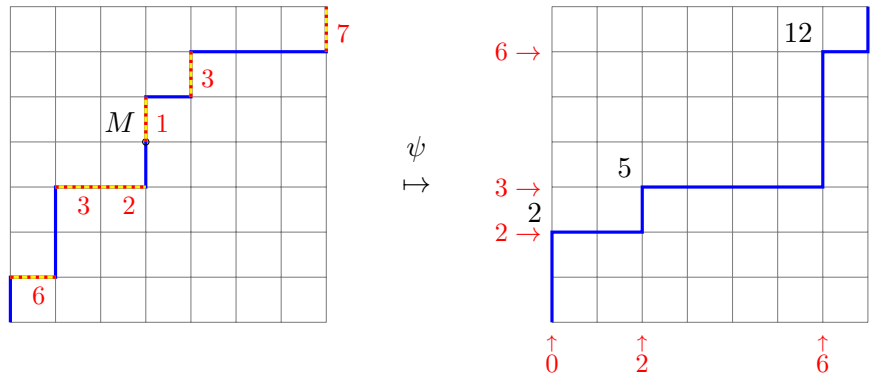

\begin{proof}[Proof of Theorem~\ref{thm:main}]
By Lemmas \ref{lem:DesPeak}, \ref{lem:peaksPG} and \ref{lem:hookpeak}, the composition of bijections $\hookpeak^{-1}\circ\xi\circ\AR:\I_n(123)\to\{\lambda\subseteq B_n\}$ maps the statistic $\Des$ to the statistic $\hd$. See Figure~\ref{fig:bijections} for a diagram of the preserved statistics, and Figure~\ref{fig:exbij} for an example.
\end{proof}

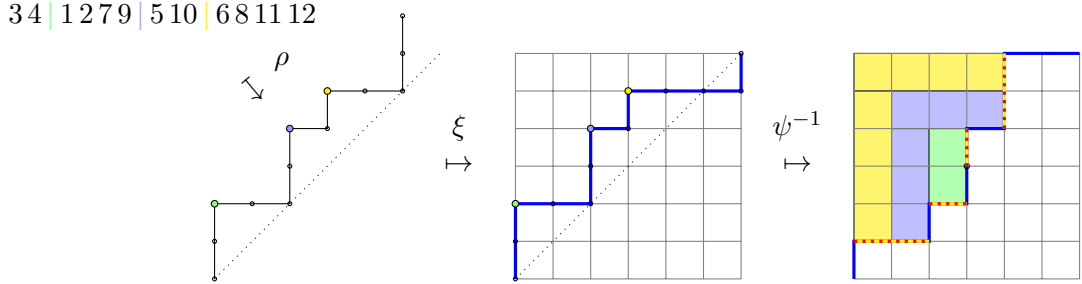
\begin{figure}[htb]
  \begin{center}
    \begin{tikzpicture}[scale=0.5]

      \draw (3,7) node[left] {$3\,4\,\textcolor{green!50}{|}\, 1\,2\,7\,9\,\textcolor{blue!40}{|}\,5\,10\,\textcolor{yellow!100}{|}\,6\,8\,11\,12$};
      \draw (1,5) node [label=$\AR$,rotate=-50] {$\mapsto$};
     \coordinate (origin) at (0,0);
      \draw (origin) coordinate(d0)
      -- ++(0,1) coordinate(d1)
      -- ++(0,1) coordinate(d2)
      -- ++(1,0) coordinate(d3)
      -- ++(1,0) coordinate(d4)
      -- ++(0,1) coordinate(d5)
      -- ++(0,1) coordinate(d6)
      -- ++(1,0)  coordinate(d7)
      -- ++(0,1) coordinate(d8)
      -- ++(1,0) coordinate(d9)
      -- ++(1,0)  coordinate(d10)
      -- ++(0,1)  coordinate(d11)
      -- ++(0,1) coordinate(d12);
      \foreach \x in {0,...,12} {
        \draw (d\x) circle (1.5pt);
      }
      \draw[fill=green!50] (d2) circle (2.5pt);
      \draw[fill=blue!40] (d6) circle (2.5pt);
      \draw[fill=yellow!100] (d8) circle (2.5pt);
      \draw[dotted] (origin)-- ++(6,6);
      \draw (origin) ++(6.5,3) node [label=$\xi$] {$\mapsto$};

      \coordinate (origin) at (8,0);
      \draw[gray,thin] (origin) grid ++(6,6);
      \draw[blue,very thick] (origin) coordinate(d0)
      -- ++(0,1) coordinate(d1)
      -- ++(0,1) coordinate(d2)
      -- ++(1,0) coordinate(d3)
      -- ++(1,0) coordinate(d4)
      -- ++(0,1) coordinate(d5)
      -- ++(0,1) coordinate(d6)
      -- ++(1,0)  coordinate(d7)
      -- ++(0,1) coordinate(d8)
      -- ++(1,0) coordinate(d9)
      -- ++(1,0)  coordinate(d10)
      -- ++(1,0)  coordinate(d11)
      -- ++(0,1) coordinate(d12);
      \foreach \x in {0,...,12} {
        \draw (d\x) circle (1.5pt);
      }
      \draw[fill=green!50] (d2) circle (2.5pt);
      \draw[fill=blue!40] (d6) circle (2.5pt);
      \draw[fill=yellow!100] (d8) circle (2.5pt);
      \draw[dotted] (origin)-- ++(6,6);
      \draw (origin) ++(7.5,3) node [label=$\hookpeak^{-1}$] {$\mapsto$};

      \coordinate (origin) at (17,0);
      \draw[fill=yellow!70] (origin)++(0,5) rectangle ++(4,1);
      \draw[fill=yellow!70] (origin)++(0,1) rectangle ++(1,4);
      \draw[fill=blue!25] (origin)++(1,4) rectangle ++(3,1);
      \draw[fill=blue!25] (origin)++(1,1) rectangle ++(1,3);
      \draw[fill=green!30] (origin)++(2,2) rectangle ++(1,2);
      \draw[gray,thin] (origin) grid ++(6,6);
      \draw[blue,very thick] (origin) coordinate(d0)
      -- ++(0,1) coordinate(d1)
      -- ++(1,0) coordinate(d2)
      -- ++(1,0) coordinate(d3)
      -- ++(0,1) coordinate(d4)
      -- ++(1,0) coordinate(d5)
      -- ++(0,1) coordinate(d6)
      -- ++(0,1)  coordinate(d7)
      -- ++(1,0) coordinate(d8)
      -- ++(0,1) coordinate(d9)
      -- ++(0,1)  coordinate(d10)
      -- ++(1,0)  coordinate(d11)
      -- ++(1,0) coordinate(d12);
      \draw (d6) circle (2pt);
      \draw[yellow,very thick] (d1)--(d3);
      \draw[yellow,very thick] (d4)--(d5);
      \draw[yellow,very thick] (d6)--(d7);
      \draw[yellow,very thick] (d8)--(d10);
      \draw[red,dotted,very thick] (d1)--(d3);
      \draw[red,dotted,very thick] (d4)--(d5);
      \draw[red,dotted,very thick] (d6)--(d7);
      \draw[red,dotted,very thick] (d8)--(d10);
    \end{tikzpicture}
  \end{center}
  \caption{An example of the sequence of bijections in the proof of Theorem~\ref{thm:main}. Note that the resulting partition $(4,4,3,3,2)$ has hook decomposition $\{2,6,8\}$, which agrees with the descent set of the $321$-avoiding involution that we started from.}\label{fig:exbij}
\end{figure}

As a consequence of Theorem~\ref{thm:main}, we obtain the following refinement of Theorems~\ref{thm:des} and~\ref{thm:maj}.

\begin{corollary}
For every $0\le k<n$,
$$\sum_{\substack{\pi\in\I_n(321)\\ \des(\pi)=k}}q^{\maj(\pi)}=q^{k^2}\binom{\cn2}{k}_q\binom{\fn2}{k}_q.$$
\end{corollary}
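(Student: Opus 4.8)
The plan is to combine the bijection of Theorem~\ref{thm:main} with the classical Durfee-square decomposition of a partition. The first step is to translate the left-hand side into a generating function over partitions. Under the bijection of Theorem~\ref{thm:main}, an involution $\pi\in\I_n(321)$ with $\Des(\pi)=\{i_1,\dots,i_k\}$ corresponds to a partition $\lambda\subseteq B_n$ with $\hd(\lambda)=\{i_1,\dots,i_k\}$ and $|\lambda|=i_1+\dots+i_k$. Since
$$\maj(\pi)=\sum_{i\in\Des(\pi)}i=i_1+\dots+i_k=|\lambda|,$$
the bijection sends $\maj$ to the size of the partition, and $\des(\pi)=k=|\hd(\lambda)|$, which by definition is the side length of the Durfee square of $\lambda$. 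Taking the union over all admissible subsets $\{i_1,\dots,i_k\}$, this reduces the corollary to proving the identity
$$\sum_{\substack{\lambda\subseteq B_n\\ \text{Durfee side}=k}} q^{|\lambda|}=q^{k^2}\binom{\cn2}{k}_q\binom{\fn2}{k}_q.$$

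To establish this identity I would invoke the standard decomposition of a partition whose Durfee square has side exactly $k$ into three pieces: the $k\times k$ Durfee square itself (contributing the factor $q^{k^2}$), the partition $\mu$ formed by the cells of rows $1,\dots,k$ lying beyond column $k$, and the partition $\nu$ formed by the rows below the square. Recalling that $B_n$ has width $\cn2$ and height $\fn2$, the containment $\lambda\subseteq B_n$ forces $\mu$ to fit inside a $k\times(\cn2-k)$ rectangle (at most $k$ rows, each of length at most $\cn2-k$) and $\nu$ to fit inside a $(\fn2-k)\times k$ rectangle. Conversely, any such pair $(\mu,\nu)$ reassembles into a valid $\lambda\subseteq B_n$ whose Durfee square has side exactly $k$: row $k$ contains the full square, so $\lambda_k\ge k$, while the bound of $k$ columns on $\nu$ guarantees $\lambda_{k+1}\le k$, preventing the Durfee side from exceeding $k$. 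Since $|\lambda|=k^2+|\mu|+|\nu|$, this is a weight-preserving bijection onto the product of the two rectangular families.

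The final step is to apply the well-known fact that the generating function $\sum q^{|\cdot|}$ for partitions fitting inside an $a\times b$ rectangle equals the Gaussian binomial $\binom{a+b}{a}_q$. Using $(a,b)=(k,\cn2-k)$ for $\mu$ gives $\binom{\cn2}{k}_q$, and $(a,b)=(\fn2-k,k)$ for $\nu$ gives $\binom{\fn2}{k}_q$, which together with the factor $q^{k^2}$ yield the claimed identity.

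I do not expect a genuine obstacle here, since every piece is either a consequence of Theorem~\ref{thm:main} or a textbook fact about $q$-binomials. The only point demanding care is the bookkeeping of box dimensions: one must verify the correct assignment of $\cn2$ (width) and $\fn2$ (height), check that the Durfee square of side $k$ actually fits in $B_n$ (which requires $k\le\fn2$, with both sides vanishing when $\fn2<k<n$), and confirm that capping $\nu$ at $k$ columns pins the Durfee side to exactly $k$ rather than something larger.
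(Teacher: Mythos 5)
Your proposal is correct and follows essentially the same route as the paper: both reduce the left-hand side via the bijection of Theorem~\ref{thm:main} to the area generating function for partitions inside $B_n$ with Durfee square of side $k$, then apply the Durfee-square decomposition together with the standard fact that $\binom{a+b}{a}_q$ enumerates partitions in an $a\times b$ box by area. You simply spell out the rectangle dimensions and the exactness of the Durfee side in more detail than the paper does.
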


\begin{proof}
By the bijection in Theorem~\ref{thm:main}, the left hand side is the generating polynomial for Young diagrams inside $B_n$ with Durfee square of side $k$ with respect to area. The formula on the right hand side follows by decomposing such diagrams as in Figure~\ref{fig:partitiondecomposition} and using that $\binom{a+b}{a}_q$ is the generating polynomial for Young diagrams inside an $a\times b$ box with respect to area.
\end{proof}

\section{Consequences}\label{sec:consequences}

\subsection{Ascent sets on $123$-avoiding involutions}\label{sec:123}

Our work on descents on $321$-avoiding involutions easily extends to describe the distribution of the ascent set and comajor index on $123$-avoiding involutions. Note that this does not follow from any trivial symmetries on permutations, since those that take $321$-avoiding permutations to $123$-avoiding ones do not preserve the property of being an involution.

Given $\pi\in\I_n$, the Robinson-Schensted algorithm associates to it a pair $(Q,Q)$ of indentical standard Young tableaux of size $n$. Let $Q^T$ be the standard Young tableaux obtained by transposing $Q$, and let
$\pi^{T}\in\I_n$ be the preimage of the  pair $(Q^{T},Q^{T})$ under the Robinson-Schensted correspondence. The map $\pi\mapsto \pi^{T}$ is a bijection from $\I_n$ to itself.
As an example, the image of the involution $8\,6\,12\,11\,5\,2\,10\,1\,9\,7\,4\,3$ is $3\,4\,1\,2\,7\,9\,5\,10\,6\,8\,11\,12$.

The following result, which first appeared in \cite{Strehl} (see \cite{BBS2} for a detailed exposition), is an immediate consequence of the fact that the descent set of a permutation equals the descent set of its recording tableau.

\begin{prop}[\cite{Strehl}]\label{prop:Strehl}
For every $\pi\in\I_n$, we have $\Asc(\pi)=\Des(\pi^{T})$.
\end{prop}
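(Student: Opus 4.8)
The plan is to reduce Proposition~\ref{prop:Strehl} to the well-known fact that the descent set of a permutation coincides with the descent set of its recording tableau under the Robinson-Schensted correspondence. Since $\pi$ is an involution, its insertion and recording tableaux agree, so I will write $Q$ for the common tableau associated to $\pi$, and $Q^T$ for the common tableau associated to $\pi^T$ by the very definition of $\pi^T$. The identity we ultimately want, $\Asc(\pi)=\Des(\pi^T)$, will follow once we translate both sides into statements purely about the tableau $Q$ and its transpose $Q^T$.

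First I would recall the tableau-descent statement used already in the proof of Lemma~\ref{lem:DesPeak}: for any permutation $\sigma$ with recording tableau $P$, one has $\Des(\sigma)=\Des(P)$, where $\Des(P)$ is the set of $i$ such that $i+1$ lies in a strictly lower row of $P$ than $i$ (see \cite[Remarque 2]{Schu} and \cite[Lemma 7.23.1]{EC2}). Applying this to $\pi^T$, whose recording tableau is $Q^T$, gives
$$\Des(\pi^T)=\Des(Q^T).$$
The remaining task is therefore to show that $\Des(Q^T)=\Asc(\pi)$. I would attack this by observing that transposing a tableau exchanges ``row-descents'' with ``row-ascents'': an index $i$ is a descent of $Q^T$ (meaning $i+1$ sits in a lower row of $Q^T$ than $i$) exactly when $i+1$ sits in a later \emph{column} of $Q$ than $i$, which in a standard Young tableau is precisely the condition that $i$ is \emph{not} a descent of $Q$. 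Hence $\Des(Q^T)=[n-1]\setminus\Des(Q)=\Asc(Q)$.

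To finish, I would use the tableau-descent statement once more, now for $\pi$ itself: its recording tableau is $Q$, so $\Des(\pi)=\Des(Q)$, and therefore $\Asc(\pi)=[n-1]\setminus\Des(\pi)=[n-1]\setminus\Des(Q)=\Asc(Q)$. Chaining the equalities,
$$\Asc(\pi)=\Asc(Q)=\Des(Q^T)=\Des(\pi^T),$$
which is the claim.

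The main obstacle is the middle step, namely verifying that transposing a standard Young tableau sends its descent set to the complementary set within $[n-1]$. The subtle point is that for \emph{general} fillings ``lower row'' and ``later column'' need not be exact complements, but for a standard Young tableau the entries $i$ and $i+1$ are always in comparable positions, so exactly one of ``$i+1$ strictly below $i$'' and ``$i+1$ strictly to the right of $i$ with respect to the transpose'' holds; I would spell this out by a short case analysis on the relative row and column positions of $i$ and $i+1$ in $Q$, noting that they cannot occupy the same cell and that standardness forbids $i+1$ from being both weakly above and weakly left of $i$. Everything else is a direct bookkeeping of the definitions and an invocation of results already quoted in the excerpt.
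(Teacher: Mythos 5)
Your argument is correct and follows essentially the same route the paper takes: the paper dismisses this as an immediate consequence of the fact that $\Des(\sigma)$ equals the descent set of the recording tableau (citing \cite{Strehl} and \cite{BBS2} for details), which is exactly the reduction you perform. Your case analysis establishing $\Des(Q^T)=[n-1]\setminus\Des(Q)$ for standard Young tableaux correctly supplies the one nontrivial detail the paper leaves implicit.
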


Consider now the set $\I_n(12\ldots k)$ of involutions in $\S_n$ that avoid the pattern $12\ldots k$.
By Schensted's Theorem~\cite{Sche}, the Robinson-Schensted algorithm associates to each $\pi\in \I_n(12\ldots k)$ a standard Young tableau $Q$ with at most $k-1$ columns. Equivalently, the tableau $Q^T$ has at most $k-1$ rows,
and so its corresponding involution $\pi^{T}$ avoids $k\ldots 21$. It follows that the map $\pi\mapsto \pi^{T}$ induces a bijection between $\I_n(12\ldots k)$ and $\I_n(k\dots 21)$ with the property that $\Asc(\pi)=\Des(\pi^T)$.
For $k=3$, the following results are now equivalent to Theorems~\ref{thm:main} and~\ref{thm:maj}, respectively.

\begin{corollary}\label{thm:newmain}
Let $1\le i_1<i_2<\dots<i_k<n$, and let $m=i_1+\dots+i_k$.
There is a bijection
$$\{\pi\in\I_n(123):\Asc(\pi)=\{i_1,i_2,...,i_k\}\}\longrightarrow
\{\lambda\vdash m:\hd(\lambda)=\{i_1,i_2,\dots,i_k\},\lambda\subseteq B_n\}.$$
\end{corollary}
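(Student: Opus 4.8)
The plan is to deduce Corollary~\ref{thm:newmain} directly from Theorem~\ref{thm:main} by transporting the statement across the involution $\pi\mapsto\pi^T$, exactly as the paragraph preceding the corollary already sets up. The key observation is that the three ingredients needed are now all in place: first, the map $\pi\mapsto\pi^T$ restricts to a bijection between $\I_n(123)$ and $\I_n(321)$ (since $Q$ having at most two columns is equivalent to $Q^T$ having at most two rows, by Schensted's Theorem); and second, Proposition~\ref{prop:Strehl} tells us that $\Asc(\pi)=\Des(\pi^T)$ under this map. Thus $\pi\mapsto\pi^T$ carries the left-hand set of Corollary~\ref{thm:newmain} onto the left-hand set of Theorem~\ref{thm:main} in a way that matches ascent sets to descent sets.

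Concretely, I would argue as follows. Fix $1\le i_1<\dots<i_k<n$ and $m=i_1+\dots+i_k$. The map $\pi\mapsto\pi^T$ is a bijection from $\I_n(123)$ to $\I_n(321)$, and by Proposition~\ref{prop:Strehl} it satisfies $\Asc(\pi)=\Des(\pi^T)$. Hence it restricts to a bijection
$$\{\pi\in\I_n(123):\Asc(\pi)=\{i_1,\dots,i_k\}\}\longrightarrow\{\sigma\in\I_n(321):\Des(\sigma)=\{i_1,\dots,i_k\}\}.$$
Composing this with the bijection supplied by Theorem~\ref{thm:main},
$$\{\sigma\in\I_n(321):\Des(\sigma)=\{i_1,\dots,i_k\}\}\longrightarrow\{\lambda\vdash m:\hd(\lambda)=\{i_1,\dots,i_k\},\,\lambda\subseteq B_n\},$$
yields the desired bijection, since a composition of bijections is a bijection. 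This establishes Corollary~\ref{thm:newmain}.

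I do not expect any genuine obstacle here, since all the real work is already done: the heart of the argument is the non-trivial chain $\hookpeak^{-1}\circ\xi\circ\AR$ behind Theorem~\ref{thm:main}, and the present statement merely precomposes with the elementary transpose bijection. The only point that deserves care is verifying that $\pi\mapsto\pi^T$ really does send $\I_n(123)$ bijectively onto $\I_n(321)$ with the claimed effect on statistics; but this is precisely the content of the discussion surrounding Proposition~\ref{prop:Strehl}, where the two-column/two-row duality and the identity $\Asc(\pi)=\Des(\pi^T)$ are recorded. I would therefore keep the proof to a single short paragraph, emphasizing that the statement is obtained from Theorem~\ref{thm:main} by applying the transpose bijection, and I would remark (as the authors do for Theorem~\ref{thm:main}) that summing over all subsets $\{i_1,\dots,i_k\}$ with prescribed sum $m$ recovers the analogue of Theorem~\ref{thm:maj} for $\comaj$ on $123$-avoiding involutions.
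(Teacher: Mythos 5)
Your proposal is correct and follows exactly the paper's route: the authors likewise obtain Corollary~\ref{thm:newmain} from Theorem~\ref{thm:main} by precomposing with the transpose bijection $\pi\mapsto\pi^T$, using Proposition~\ref{prop:Strehl} and the Schensted row/column duality to see that this map sends $\I_n(123)$ to $\I_n(321)$ while turning ascent sets into descent sets. Nothing is missing.
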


\begin{corollary}\label{thm:newmaj}
For $n\ge1$,
$$\sum_{\pi\in\I_n(123)} q^{\comaj(\pi)}=\binom{n}{\fn2}_q.$$
\end{corollary}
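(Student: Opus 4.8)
The plan is to reduce this identity to Theorem~\ref{thm:maj} by means of the transpose bijection $\pi\mapsto\pi^T$ already introduced. Recall from the discussion preceding the statement that this map restricts to a bijection between $\I_n(123)$ and $\I_n(321)$, and that by Proposition~\ref{prop:Strehl} it satisfies $\Asc(\pi)=\Des(\pi^T)$ for every involution $\pi$. These two facts together are exactly the ingredients needed.

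First I would observe that the set equality $\Asc(\pi)=\Des(\pi^T)$ immediately transports the comajor index to the major index: summing over the common elements gives
$$\comaj(\pi)=\sum_{i\in\Asc(\pi)}i=\sum_{i\in\Des(\pi^T)}i=\maj(\pi^T).$$
Consequently, reindexing the left-hand side of the claimed identity along $\pi\mapsto\pi^T$ and writing $\sigma=\pi^T$ yields
$$\sum_{\pi\in\I_n(123)}q^{\comaj(\pi)}=\sum_{\sigma\in\I_n(321)}q^{\maj(\sigma)},$$
where $\sigma$ ranges over all of $\I_n(321)$ precisely because $\pi\mapsto\pi^T$ is a bijection between the two sets. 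Finally I would invoke Theorem~\ref{thm:maj} to identify the right-hand side with the central $q$-binomial coefficient $\binom{n}{\fn2}_q$, completing the argument.

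There is essentially no obstacle here, since all the substantive work has already been carried out in establishing the transpose bijection and in proving Theorem~\ref{thm:maj}. The only point deserving a moment's care is the passage from the set equality $\Asc(\pi)=\Des(\pi^T)$ to the numerical equality $\comaj(\pi)=\maj(\pi^T)$, which is immediate once one recalls that both statistics are by definition sums of the elements of these sets. Alternatively, one could obtain the same conclusion by summing the refined bijection of Corollary~\ref{thm:newmain} over all subsets $\{i_1,\dots,i_k\}\subseteq[n-1]$ with $i_1+\dots+i_k=m$, and then applying the standard interpretation of $\binom{n}{\fn2}_q$ as the area generating polynomial for partitions $\lambda\subseteq B_n$, exactly mirroring the way Theorem~\ref{thm:main} implies Theorem~\ref{thm:maj}.
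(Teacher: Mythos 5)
Your proof is correct and follows exactly the route the paper intends: the transpose bijection $\pi\mapsto\pi^T$ between $\I_n(123)$ and $\I_n(321)$ together with Proposition~\ref{prop:Strehl} gives $\comaj(\pi)=\maj(\pi^T)$, and the result then follows from Theorem~\ref{thm:maj}. No issues.
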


\subsection{Large $n$}\label{sec:largen}

Even though the simplest proof of Theorem~\ref{thm:main} that we know uses a composition of three non-trivial bijections, as described in Section~\ref{sec:maj}, it is interesting to note that for $n$ large enough (relative to $i_k$), there is a simpler proof. It will be more convenient to work with $123$-avoiding involutions, so we will consider Corollary~\ref{thm:newmain}, which is equivalent to Theorem~\ref{thm:main} via the map in Proposition~\ref{prop:Strehl}.
Next we sketch a direct proof of Corollary~\ref{thm:newmain} for large $n$ which does not use lattice paths.

Fix $1\le i_1<i_2<\dots<i_k\ll n$. It is easy to see that any involution $\pi\in\I_n(123)$ is uniquely determined by the positions of its \LTR minima, that is, the set of indices $i$ such that $\pi(i)<\pi(j)$ for all $j<i$. Note also that since $\pi$ avoids $123$, we have $i\in\Asc(\pi)$ if and only if $i$ is a \LTR minimum and $i+1$ is not. It follows that if $\Asc(\pi)=\{i_1,i_2,...,i_k\}\neq\emptyset$, then the set of \LTR minima of $\pi$ has the form
\begin{equation}[1,i_1]\cup[a_2,i_2]\cup[a_3,i_3]\cup\dots\cup[a_k,i_k]\cup [b,n],\label{eq:ltr}\end{equation}
where $i_{j-1}+2\le a_j\le i_j$ for each $2\le j\le k$, and $n-i_1+2\le b\le n+1$. In fact, for $n$ large enough (more precisely, $n\geq 2(i_k-k+1)$), any such choice of the $a_j$ and $b$ yields a valid $\pi\in\I_n(123)$.
Thus,
$$|\{\pi\in\I_n(123):\Asc(\pi)=\{i_1,i_2,...,i_k\}\}|=i_1\prod_{j=2}^k (i_j-i_{j-1}-1).$$
On the other hand, if $n\geq 2(i_k-k+1)$, then every partition $\lambda$ with $\hd(\lambda)=\{i_1,i_2,\dots,i_k\}\neq\emptyset$ has a Young diagram that fits inside $B_n$, and so
$$|\{\lambda:\hd(\lambda)=\{i_1,i_2,\dots,i_k\},\lambda\subseteq B_n\}|=|\{\lambda:\hd(\lambda)=\{i_1,i_2,\dots,i_k\}\}|=i_1\prod_{j=2}^k (i_j-i_{j-1}-1),$$
since we can construct such a partition by first choosing among the $i_1$ ways to bend the innermost hook (the one of size $i_1$), then choosing among the $i_2-i_1-1$ ways to place the hook of size $i_2$ around the hook of size $i_1$, and so on, placing the hooks from the inside to the outside. The degenerate case $\Asc(\pi)=\emptyset$ corresponds to the empty partition.

The above argument, which proves Corollary~\ref{thm:newmain} when $n$ is large, breaks down for small $n$, and there does not seem to be a natural way to fix it.
It is not true for any $n$ that every choice of \LTR minima of the form~\eqref{eq:ltr}
is realized by an involution in $\I_n(123)$, nor that every placement of hooks as described above will produce a Young diagram that fits inside $B_n$.

\medskip

Next we discuss some consequences of Theorems~\ref{thm:maj} and Theorem~\ref{thm:main} when $n$ is large. Using that every partition of $m$ fits inside $B_n$ for $n$ large enough, Theorem~\ref{thm:maj} implies the following.

\begin{corollary}
For $n\ge 2m$,
$$|\{\pi\in\I_n(321):\maj(\pi)=m\}|=p(m),$$
where $p(m)$ is the number of partitions of $m$.
\end{corollary}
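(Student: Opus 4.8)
The plan is to deduce this statement directly from Theorem~\ref{thm:maj} by exploiting the well-known combinatorial interpretation of the central $q$-binomial coefficient recalled just after that theorem. Recall that the coefficient of $q^m$ in $\binom{n}{\fn2}_q$ counts the partitions $\lambda\vdash m$ whose Young diagram fits inside the box $B_n=\fn2\times\cn2$. Therefore, by Theorem~\ref{thm:maj}, the quantity $|\{\pi\in\I_n(321):\maj(\pi)=m\}|$ equals the number of partitions of $m$ that fit inside $B_n$. The goal is then reduced to showing that, once $n$ is large enough relative to $m$, the box constraint $\lambda\subseteq B_n$ becomes vacuous, so that this count is simply $p(m)$, the total number of partitions of $m$.

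The key step is the following elementary size estimate. If $\lambda\vdash m$, then the side lengths of its Young diagram are controlled by $m$: the number of rows $\ell(\lambda)$ and the largest part $\lambda_1$ each satisfy $\ell(\lambda)\le m$ and $\lambda_1\le m$. In fact one can do a little better — both are at most $m$, and the diagram certainly fits inside an $m\times m$ box — but the crude bound suffices. Under the hypothesis $n\ge 2m$ I would verify that $\fn2\ge m$ and $\cn2\ge m$, so that the $\fn2\times\cn2$ box $B_n$ contains the $m\times m$ box and hence contains the Young diagram of every partition of $m$. Indeed, $n\ge 2m$ gives $\fn2\ge m$, and then $\cn2\ge\fn2\ge m$ as well, so $B_n$ has both dimensions at least $m$. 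This is the step that pins down the precise threshold $n\ge 2m$ appearing in the statement.

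Combining the two observations finishes the argument: for $n\ge 2m$ every partition $\lambda\vdash m$ satisfies $\lambda\subseteq B_n$, so the set $\{\lambda\vdash m:\lambda\subseteq B_n\}$ coincides with the set of all partitions of $m$, which has cardinality $p(m)$. Chaining this equality with Theorem~\ref{thm:maj} yields
$$|\{\pi\in\I_n(321):\maj(\pi)=m\}|=|\{\lambda\vdash m:\lambda\subseteq B_n\}|=p(m),$$
as desired. I do not anticipate a genuine obstacle here; the only point requiring care is the bookkeeping of floors and ceilings to confirm that $n\ge 2m$ is exactly the hypothesis that forces both sides of the box to have length at least $m$. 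It is worth noting that the threshold $n\ge 2m$ is not tight for every individual partition, but it is the simplest uniform bound guaranteeing that \emph{all} partitions of $m$ fit, which is what the uniform statement requires.
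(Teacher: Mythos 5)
Your argument is correct and is exactly the paper's proof (which the paper compresses into a single sentence: every partition of $m$ fits inside $B_n$ once $n$ is large enough, so Theorem~\ref{thm:maj} gives the count $p(m)$). Your verification that $n\ge 2m$ forces $\fn2\ge m$ and $\cn2\ge m$, hence that the $m\times m$ box containing any $\lambda\vdash m$ sits inside $B_n$, is the right bookkeeping and matches the stated threshold.
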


We remark that this result is somewhat reminiscent of Propositions 11 and 15 in~\cite{CJS}, which give formulas in terms of $m$ for counting pattern-avoiding permutations in $\S_n$ with $m$ inversions when $n$ is large enough.

Along the same lines, Theorem~\ref{thm:main}
can be used to obtain the generating function for $321$-avoiding involutions with $k$ descents according to their descent set. If $S=\{i_1,i_2,...,i_k\}$ with $i_1<\dots<i_k$, we write $\x^S=x_1^{i_1}\dots x_k^{i_k}$.

\begin{corollary}
\begin{align*}\lim_{n\to\infty}\sum_{\pi\in\I_n(321)} \x^{\Des(\pi)}&=\sum_{k\ge0}\frac{x_1 x_2^{3} \dots x_k^{2k-1}}{(1-x_k)^2 (1-x_{k-1} x_k)^2 \dots (1-x_1 x_2 \dots x_k)^2},\\
\lim_{n\to\infty}\sum_{\pi\in\I_n(321)} t^{\des(\pi)} q^{\maj(\pi)}&=\sum_{k\ge0} \frac{t^k q^{k^2}}{(1-q)^2 (1-q^2)^2 \dots (1-q^k)^2}.
\end{align*}
\end{corollary}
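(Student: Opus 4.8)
The plan is to use the bijection from Theorem~\ref{thm:main}, which for each fixed $n$ identifies $321$-avoiding involutions having descent set $\{i_1,\dots,i_k\}$ with partitions $\lambda \subseteq B_n$ whose hook decomposition is $\hd(\lambda)=\{i_1,\dots,i_k\}$. Taking the limit $n\to\infty$ removes the containment constraint $\lambda\subseteq B_n$, since every partition eventually fits inside $B_n$ (in fact once $n\ge 2(i_k-k+1)$, as noted in Section~\ref{sec:largen}). Hence
\begin{equation}
\lim_{n\to\infty}\sum_{\pi\in\I_n(321)} \x^{\Des(\pi)} = \sum_{k\ge0}\ \sum_{\substack{\lambda:\,\hd(\lambda)=\{i_1,\dots,i_k\}\\ i_1<\dots<i_k}} \x^{\hd(\lambda)},
\label{eq:limidea}
\end{equation}
so the whole problem reduces to computing the generating function for \emph{all} partitions, tracking the entries of the hook decomposition via the variables $x_1,\dots,x_k$.

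The key step is to parametrize partitions by their hook decomposition. A partition with Durfee square of side $k$ is determined by its $k$ principal hooks, which can be encoded by the arm lengths $\alpha_1\ge\dots\ge\alpha_k\ge0$ and leg lengths $\beta_1\ge\dots\ge\beta_k\ge0$ (the Frobenius coordinates), where the $j$-th hook from the outside has total length $\alpha_j+\beta_j+1$. The entries of $\hd(\lambda)$ are exactly these hook lengths, written in increasing order, so I would set $i_{k+1-j}=\alpha_j+\beta_j+1$. The condition $i_j-i_{j-1}>1$ for consecutive entries—forced by the strict inequalities $\alpha_1>\alpha_2>\cdots$ wait, Frobenius coordinates are only weakly decreasing; I must instead use the self-avoiding structure, namely that distinct principal hooks have strictly decreasing arm \emph{and} strictly decreasing leg lengths, giving $\alpha_{j-1}\ge\alpha_j+1$ and $\beta_{j-1}\ge\beta_j+1$, whence $i_{j}-i_{j-1}\ge 2$ automatically. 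The cleanest route is to change variables to the gaps. For a fixed $k$, writing $i_k=i_k$ and the successive differences $d_j=i_j-i_{j-1}\ge2$ (with $i_1\ge1$), the monomial $\x^{\hd(\lambda)}=x_1^{i_1}\cdots x_k^{i_k}$ becomes a product over the ``staircase'' contribution $x_1 x_2^{3}\cdots x_k^{2k-1}$ (the minimal choice $i_j=2j-1$) times correction factors for each admissible way of enlarging the hooks.

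Concretely, I would realize the sum geometrically: placing the hooks from innermost to outermost, the innermost hook of size $i_1$ contributes a factor $x_1/(1-x_k)\cdots$—more precisely, each of the $k$ hooks can be grown independently, and growing a hook by one box (in arm or leg) multiplies several of the outer hook sizes simultaneously. The book-keeping is that increasing the arm of the $j$-th principal hook by one increases $i_k,i_{k-1},\dots,i_{k-j+1}$ each by one, i.e.\ multiplies the monomial by $x_j x_{j+1}\cdots x_k$, and likewise for the leg; this is why each denominator factor $(1-x_\ell x_{\ell+1}\cdots x_k)$ appears \emph{squared}. After extracting the minimal monomial $x_1 x_2^3\cdots x_k^{2k-1}$ (accounting for the mandatory $+1$ in each hook length and the forced gaps of $2$), the remaining free growth gives exactly
\[
\frac{1}{(1-x_k)^2(1-x_{k-1}x_k)^2\cdots(1-x_1x_2\cdots x_k)^2},
\]
which is the claimed summand. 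Summing over $k\ge0$ yields the first formula.

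The second formula follows by specialization: setting $x_i=q$ for all $i$ collapses $\x^{\Des(\pi)}$ to $q^{\maj(\pi)}$ and $\x^{\hd(\lambda)}$ to $q^{|\lambda|}$, so the left side becomes $\lim_{n\to\infty}\sum t^{\des(\pi)}q^{\maj(\pi)}$ once I also record the Durfee side $k$ with the variable $t$ (which is legitimate since $\des(\pi)=k$ equals the number of hooks under the bijection, by Theorem~\ref{thm:des} and the construction of $\hd$). Under $x_i\mapsto q$, the staircase monomial $x_1x_2^3\cdots x_k^{2k-1}$ becomes $q^{1+3+\cdots+(2k-1)}=q^{k^2}$, and each denominator factor $(1-x_\ell\cdots x_k)$ becomes $(1-q^{k-\ell+1})$, reproducing the second identity. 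I expect the main obstacle to be justifying the \emph{squared} denominators cleanly: one must verify that arm-growth and leg-growth of the principal hooks are genuinely independent and that every partition with Durfee side $k$ arises exactly once, so that the generating function factors as a product of $2k$ independent geometric series rather than over-counting. This is most safely handled by the explicit Frobenius-coordinate bijection described above, checking that the map from admissible $(\alpha_j,\beta_j)$ pairs to $\hd(\lambda)$ is weight-preserving and bijective, after which the factorization and the exchange of limit with summation over $k$ are routine.
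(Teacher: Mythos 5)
Your proposal is correct and follows essentially the same route as the paper: reduce via Theorem~\ref{thm:main} and the large-$n$ observation to the generating function of all partitions by hook decomposition, then factor that generating function as a staircase monomial $x_1x_2^3\cdots x_k^{2k-1}$ times two identical products of geometric series coming from the arms and legs of the principal hooks (the paper phrases this as the Durfee square plus the two partitions flanking it, which is the same decomposition as your Frobenius-coordinate bookkeeping), and finally specialize $x_j=q$. The only blemish is a harmless indexing slip (the move adding a box to the arms of the outermost $j$ hooks multiplies the monomial by $x_{k+1-j}\cdots x_k$, not $x_j\cdots x_k$), which does not affect the resulting product of denominator factors.
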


\begin{proof}
Let $1\le i_1<i_2<\dots<i_k$. For $n\geq 2(i_k-k+1)$, Theorem~\ref{thm:main} gives a bijection between the set of involutions $\pi\in\I_n(321)$ with $\Des(\pi)=\{i_1,i_2,...,i_k\}$ and the set of partitions $\lambda$ with
$\hd(\lambda)=\{i_1,i_2,\dots,i_k\}$, since the Young diagram of every such partition fits inside $B_n$.
The generating function for partitions $\lambda$ with Durfee square of side $k$ according to their hook decomposition is
\beq\label{eq:durfee}\sum_{\lambda}\x^{\hd(\lambda)}=\frac{x_1 x_2^{3} \dots x_k^{2k-1}}{(1-x_k)^2 (1-x_{k-1} x_k)^2 \dots (1-x_1 x_2 \dots x_k)^2},\eeq
since such partitions can be decomposed into pairs of partitions with at most $k$ parts, each contributing $\left((1-x_k)(1-x_{k-1} x_k)\dots (1-x_1 x_2 \dots x_k)\right)^{-1}$, attached to the $k\times k$ Durfee square, which contributes $x_1 x_2^{3} \dots x_k^{2k-1}$. This decomposition is illustrated in Figure~\ref{fig:partitiondecomposition}.

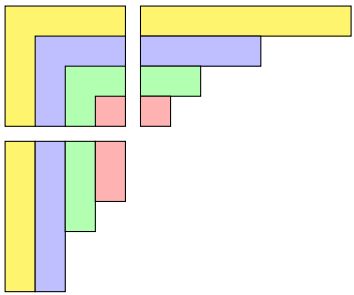
\begin{figure}[htb]
  \begin{center}
    \begin{tikzpicture}[scale=0.4]
      \draw[fill=yellow!70] (0,0) rectangle (4,-4);
      \draw[fill=blue!25] (1,-1) rectangle (4,-4);
      \draw[fill=green!30] (2,-2) rectangle (4,-4);
      \draw[fill=red!30] (3,-3) rectangle (4,-4);
      \draw[fill=yellow!70] (4.5,0) rectangle (11.5,-1);
      \draw[fill=blue!25] (4.5,-1) rectangle (8.5,-2);
      \draw[fill=green!30] (4.5,-2) rectangle (6.5,-3);
      \draw[fill=red!30] (4.5,-3) rectangle (5.5,-4);
      \draw[fill=yellow!70] (0,-4.5) rectangle (1,-9.5);
      \draw[fill=blue!25] (1,-4.5) rectangle (2,-9.5);
      \draw[fill=green!30] (2,-4.5) rectangle (3,-7.5);
      \draw[fill=red!30] (3,-4.5) rectangle (4,-6.5);
    \end{tikzpicture}
  \end{center}
  \caption{Decomposition of the Young diagram of a partition.}\label{fig:partitiondecomposition}
\end{figure}

The generating function for involutions with $k$ descents with respect to $\maj$, or equivalently partitions with Durfee square of side $k$ with respect to their size, is obtained by 
substituting $x_j=q$ for all $j$ in Equation~\eqref{eq:durfee}. Multiplying by $t^k$ and summing over all $k\ge0$ we obtain the second formula in the statement.

\end{proof}

\subsection{Double avoidance}

In closing this section, we look at the distribution of descents and major index on involutions that avoid $321$ and an additional pattern $\tau\in\S_3$. The set of such involutions of length $n$ is denoted by $\I_n(321,\tau)$.
We only consider the cases $\tau=312$ and $\tau=213$, since the set $\I_n(321,123)$ is empty for $n\geq 6$, and $\I_n(321,231)$ and $\I_n(321,132)$ correspond via the usual reverse-complement map to the studied cases.

For the case $\tau=213$, it is easy to verify that $\pi\in\I_n(321,213)$ if and only if $\pi=p(p+1)\ldots n12\ldots(p-1)$ for some $1\leq p\leq n$. Hence, $\pi$ has either one descent in position $n+1-p$ (if $p\neq1$) or no descents at all. It follows that
$$\sum_{\pi\in\I_n(321,213)} q^{\maj(\pi)}=\frac{1-q^n}{1-q}.$$

Now we consider the case $\tau=312$. It is easy to see that if $\pi\in\I_n(321,312)$, then for every $1\leq i\leq n$, the
entry $\pi(i)$ is either the smallest or the second smallest among $\pi(i),\pi(i+1),\ldots,\pi(n)$.
This condition, together with the fact that $\pi$ is an involution, implies that $\pi$ is a direct sum $\pi=\sigma_1\oplus\sigma_2\ldots\oplus\sigma_t$, where each $\sigma_j$ equals $1$ or $21$
(recall that this means that $\pi$ is a juxtaposition of words order-isomorphic to either $1$ or $21$, where the entries in each word are smaller than the entries in the next word).
These are called {\em Fibonacci permutations} in~\cite{CJS}.
It follows that the generating function with respect to the number of descents is
$$\sum_{n\ge0}\sum_{\pi\in\I_n(321,312)} t^{\des(\pi)} x^n=\frac{1}{1-x-tx^2},$$
which gives the triangle of coefficients of Fibonacci polynomials (see \cite[seq. A011973]{oeis}).

The above observation also yields a recurrence for the polynomials $p_n(q):=\sum_{\pi\in\I_n(321,312)} q^{\maj(\pi)}$,
since every $\pi\in\I_n(321,312)$ can be obtained by appending $n$ or $n(n-1)$ to an involution of length $n-1$ or $n-2$, respectively. We get that
$$p_n(q)=p_{n-1}(q)+q^{n-1}p_{n-2}(q)$$
for $n\ge2$, with initial conditions $p_0(q)=p_1(q)=1$ (see \cite[seq. A127836]{oeis}).

\section{Descent sets on $321$-avoiding permutations}\label{sec:321permutations}

Some of the ideas used above to study descent sets on $321$-avoiding involutions can be applied to $321$-avoiding permutations, even though we do not obtain nice formulas analogous to Theorems~\ref{thm:maj} and~\ref{thm:main}.

The distribution of the major index on $321$-avoiding permutations has been studied by Cheng {\it et al}., who in~\cite[Thm 6.2]{CEKS} give a recurrence for the generating polynomial for the statistic $\maj$ on $\S_n(321)$.
Here we are interested in the distribution of the whole descent set. The following result gives a simple description. In the rest of this section we denote the $m$-th Catalan number by $C_m=\frac{1}{m+1}\binom{2m}{m}$.

\begin{theorem}\label{thm:s321}
Let $S\subseteq[n-1]$. Then
$$|\{\pi\in\S_n(321):\Des(\pi)\supseteq S\}|=\begin{cases} C_{n-|S|} & \mbox{if $S$ contains no two consecutive elements,} \\ 0 &\mbox{otherwise.}\end{cases}$$
\end{theorem}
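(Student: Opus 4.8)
The plan is to characterize the $321$-avoiding permutations whose descent set contains a prescribed set $S$, and then count them directly. First, note that if $S$ contains two consecutive elements $i,i+1$, then any $\pi$ with $\{i,i+1\}\subseteq\Des(\pi)$ satisfies $\pi(i)>\pi(i+1)>\pi(i+2)$, which is an occurrence of $321$; hence no such $\pi$ lies in $\S_n(321)$ and the count is $0$. This disposes of the second case and explains why the condition ``$S$ contains no two consecutive elements'' is exactly the non-degenerate regime. From now on I would assume $S=\{s_1<\dots<s_r\}$ with $s_{j+1}-s_j\ge 2$ for all $j$, so $|S|=r$, and aim to show the count equals $C_{n-r}$.

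The key idea is a \emph{contraction}: given $\pi\in\S_n(321)$ with $\Des(\pi)\supseteq S$, each prescribed descent position $s_j$ forces $\pi(s_j)>\pi(s_j+1)$, and because $\pi$ avoids $321$, the pair of values at positions $s_j,s_j+1$ behaves very rigidly (in a $321$-avoider, every descent top is a \LTR maximum, so the two descent entries are ``locally'' an adjacent inversion). I would merge each such adjacent descent pair into a single ``super-entry'', producing a word of length $n-r$ on a linearly ordered alphabet, and argue that this collapsing is a bijection onto $\S_{n-r}(321)$. Since the descents we contract are at positions that are pairwise non-adjacent, the merges do not interfere with one another, and the requirement $\Des(\pi)\supseteq S$ (with no condition on the other positions) translates precisely into ``the contracted word is an arbitrary $321$-avoiding permutation of length $n-r$.'' Because $|\S_m(321)|=C_m$, this yields $C_{n-r}=C_{n-|S|}$.

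To make the bijection precise, I would work with the lattice-path model already developed in the paper: via $\AR$ (extended to all of $\S_n(321)$, as in~\cite{EliPak}) and Lemma~\ref{lem:DesPeak}, the condition $\Des(\pi)\supseteq S$ becomes $\Peak(\AR(\pi))\supseteq S$, i.e. the Dyck-type path is forced to have a peak (an $NE$ factor) at each position $s_j$. Counting paths with prescribed peaks is then combinatorially transparent: one removes each forced $NE$ peak, which shortens the path by $2$ at each of the $r$ non-adjacent positions and lands on the corresponding path family of length $n-2r$ counted by a Catalan-type number, and I would verify that after accounting for the path's endpoint the total is $C_{n-r}$. The \textbf{main obstacle} I anticipate is checking that the contraction is genuinely a bijection for \emph{permutations} rather than involutions: unlike the involution case, $\AR$ on $\S_n(321)$ maps to a pair of distinct tableaux / a decorated path, so I must confirm that forcing peaks at the non-adjacent positions of $S$ and contracting them is reversible and lands cleanly in $\S_{n-r}(321)$ without over- or under-counting. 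The non-consecutiveness of $S$ is exactly what guarantees the contracted structure is again $321$-avoiding and that distinct $\pi$ give distinct images, so the crux is a careful bookkeeping argument rather than a new idea.
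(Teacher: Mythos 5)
Your overall route is the paper's: translate via Robinson--Schensted to lattice paths, observe that prescribed descents become prescribed peaks, and delete the forced peaks to land in a Catalan-counted family; the case where $S$ has two consecutive elements is also disposed of the same way. However, there is a concrete gap at exactly the point you flag as the main obstacle, and your tentative arithmetic does not close it. For a non-involution the Robinson--Schensted image is a pair $(P,Q)$ of distinct tableaux, i.e.\ a pair of Dyck path prefixes of length $n$ ending at the same height. Deleting the $r=|S|$ forced peaks from the $Q$-prefix alone yields a prefix of length $n-2r$, but Dyck path prefixes are counted by central binomial coefficients, not Catalan numbers, and after the deletion the two prefixes no longer have equal length, so ``a path family of length $n-2r$ counted by a Catalan-type number'' does not hold as stated. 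The paper's resolution is to first merge the pair into a single Dyck path $D\in\D_n$ of length $2n$, namely the prefix of $Q$ followed by the \emph{reversal} of the prefix of $P$; then $\Des(\pi)=\Peak(D)\cap[n-1]$, and deleting the $r$ peaks at the non-adjacent positions of $S$ (all lying in the first half of $D$) produces a path in $\D_{n-r}$ of length $2(n-r)$, giving $C_{n-r}$ with an evident inverse. You need this concatenation step, or something equivalent, for the count to come out right.

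Separately, the permutation-level ``contraction'' you offer as motivation is not well defined as stated: if the two merged values straddle another entry --- e.g.\ $\pi=312$ with $S=\{1\}$, where the merged pair of values $\{3,1\}$ lies on both sides of the remaining entry $2$ --- the super-entry has no consistent position in the linear order, so the collapsed word is not a permutation of a totally ordered alphabet. This is harmless since you ultimately defer to the path model, but it means the first half of your argument cannot be made rigorous in the form given.
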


\begin{proof}
We apply the following bijection between $\S_n(321)$ and $\D_n$ from \cite[Section~3]{EliPak}, which is an extension of the bijection $\rho$ used above for involutions.
First, the Robinson-Schensted correspondence gives a bijection between $\S_n(321)$ and pairs $(P,Q)$ of standard Young tableaux of the same shape having $n$ boxes and at most two rows.
We can interpret $P$ and $Q$ as Dyck path prefixes ending that the same height, where the entries on the first row determine the positions of the $N$ steps, and the entries on the second row determine the positions of the $E$ steps. These two prefixes can be combined into a Dyck path by taking the prefix corresponding to the recording tableau $Q$ followed by the reversal of the prefix corresponding to the insertion tableau $P$.

Recall from the proof of Lemma~\ref{lem:DesPeak} that the Robinson-Schensted algorithm maps the descent set of the permutation to the descent set of the recording tableau $Q$, which in turn becomes the peak set of the Dyck path prefix associated to $Q$. Thus, if our bijection maps $\pi\in\S_n(321)$ to $D\in\D_n$, then
$\Des(\pi)$ is the set of peak positions in the first half of $D$, that is, $\Des(\pi)=\Peak(D)\cap[n-1]$.

It follows that, for any $S\subseteq[n-1]$, the number of permutations in $\S_n(321)$ with descent set containing $S$ equals the number of paths in $\D_n$ with peak set containing $S$. If $S$ contains two consecutive elements, this set is clearly empty. Otherwise, there is a simple bijection between $\{D\in D_n:\Peak(D)\supseteq S\}$ and $\D_{n-|S|}$: given a path in the first set, remove the peaks $NE$ in positions given by $S$. This construction gives a Dyck path with $2n-2|S|$ steps, and it is clearly invertible, since $S$ keeps track of the positions from where peaks were removed.
\end{proof}

Next we use Theorem~\ref{thm:s321} to obtain a summation formula and a recurrence for the generating polynomial for $321$-avoiding permutations with respect to the descent set.
If $S$ is a set of positive integers, we use the notation $\x_S=\prod_{j\in S}x_j$.

\begin{corollary}\label{cor:s321} We have that
\beq\label{eq:cors321}\sum_{\pi\in\S_n(321)}\x_{\Des(\pi)}=\sum_T \left(C_{n-|T|}\prod_{j\in T}(x_j-1)\right),\eeq
where $T$ ranges over all subsets of $[n-1]$ with no two consecutive elements.
\end{corollary}

\begin{proof}
For any set $S$, is is clear that $\prod_{j\in S}(1+y_j)=\sum_{T\subseteq S}\prod_{j\in T}y_j$. Making the substitution $y_j=x_j-1$ yields $$\x_S=\sum_{T\subseteq S}\prod_{j\in T}(x_j-1).$$
Using this identity, we get
$$\sum_{\pi\in\S_n(321)}\x_{\Des(\pi)}=\sum_{\pi\in\S_n(321)}\sum_{T\subseteq \Des(\pi)}\prod_{j\in T}(x_j-1)=\sum_{T\subseteq[n-1]}\sum_{\substack{\pi\in\S_n(321)\\ \Des(\pi)\supset T}}\prod_{j\in T}(x_j-1),$$
which equals the right hand of~\eqref{eq:cors321} by Theorem~\ref{thm:s321}.
\end{proof}

Extracting the coefficient of $\x_S$ in Corollary~\ref{cor:s321}, it follows that for any $S\subseteq[n-1]$ with no consecutive elements,
\beq\label{eq:mobius}|\{\pi\in\S_n(321):\Des(\pi)=S\}|=\sum_T (-1)^{|T|-|S|} C_{n-|T|},\eeq
where now $T$ ranges over all subsets of $[n-1]$ containing $S$ and having no two consecutive elements. Equation~\eqref{eq:mobius} can also be obtained directly from Theorem~\ref{thm:s321} using M\"obius inversion.

For $n,m\ge0$, let $$A_{n,m}(\x)=\sum_T \left(C_{m-|T|}\prod_{j\in T}(x_j-1)\right),$$
where $T$ ranges over all subsets of $[n-1]$ with no two consecutive elements. Note that $A_{n,n}(\x)$ is the right hand side of Equation~\eqref{eq:cors321}. Separating terms depending on whether $n-1\in T$ or not, we obtain the following recurrence for $A_{n,m}(\x)$.

\begin{corollary}\label{cor:321rec}
For $n\ge2$ and $m\ge1$,
$$A_{n,m}(\x)=(x_{n-1}-1)A_{n-2,m-1}(\x)+A_{n-1,m}(\x),$$
with initial conditions $A_{0,m}(\x)=A_{1,m}(\x)=C_m$ for $m\ge0$ and $A_{n,0}(\x)=1$ for $n\ge2$.
\end{corollary}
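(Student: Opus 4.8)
The plan is to prove Corollary~\ref{cor:321rec} by splitting the defining sum for $A_{n,m}(\x)$ according to whether the largest possible index $n-1$ belongs to the indexing set $T$ or not. Recall that $A_{n,m}(\x)=\sum_T C_{m-|T|}\prod_{j\in T}(x_j-1)$, where $T$ ranges over the subsets of $[n-1]$ with no two consecutive elements. The key observation is that the summand only depends on $|T|$ and on the specific elements of $T$ through the product $\prod_{j\in T}(x_j-1)$, while the constraint ``$T\subseteq[n-1]$ with no two consecutive elements'' is what the recurrence parameters $n$ and $m$ track. So the recurrence should fall out of a purely combinatorial partition of the index set into two classes.

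The steps I would carry out are as follows. First, I would partition the family of admissible sets $T\subseteq[n-1]$ (no two consecutive elements) into those with $n-1\notin T$ and those with $n-1\in T$. For the first class, the admissible sets are exactly the subsets of $[n-2]$ with no two consecutive elements, and summing over them reproduces precisely $A_{n-1,m}(\x)$. For the second class, if $n-1\in T$, then the no-two-consecutive condition forces $n-2\notin T$, so $T\setminus\{n-1\}$ ranges over all subsets of $[n-3]$ with no two consecutive elements; writing $T'=T\setminus\{n-1\}$, the summand factors as $(x_{n-1}-1)\cdot C_{m-1-|T'|}\prod_{j\in T'}(x_j-1)$, and summing over $T'$ gives exactly $(x_{n-1}-1)A_{n-2,m-1}(\x)$. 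Adding the two contributions yields the claimed identity $A_{n,m}(\x)=(x_{n-1}-1)A_{n-2,m-1}(\x)+A_{n-1,m}(\x)$.

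Finally I would verify the initial conditions directly from the definition. When $n\in\{0,1\}$ the only admissible set is $T=\emptyset$ (there are no indices in $[n-1]$ to choose), so $A_{0,m}(\x)=A_{1,m}(\x)=C_m$. When $m=0$, the only subset contributing a nonzero Catalan factor would require $|T|=0$ (since $C_{-r}$ is not defined, or is taken to be zero, for $r>0$), so again only $T=\emptyset$ survives and $A_{n,0}(\x)=C_0=1$.

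I do not expect a genuine obstacle here, since the argument is a clean bookkeeping split; the only point requiring a little care is the handling of the convention for Catalan numbers with negative or out-of-range indices in the base cases, and making sure the index shift $T\mapsto T\setminus\{n-1\}$ correctly changes both the ground set (from $[n-1]$ to $[n-3]$, via the forced exclusion of $n-2$) and the size parameter (from $m$ to $m-1$). Once these two shifts are aligned with the subscripts of $A$, the recurrence is immediate.
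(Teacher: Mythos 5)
Your proposal is correct and follows exactly the paper's argument: the paper derives the recurrence by ``separating terms depending on whether $n-1\in T$ or not,'' which is precisely your case split, and your verification of the initial conditions (with the natural convention for out-of-range Catalan indices) fills in details the paper leaves implicit. No issues.
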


Note that setting $x_j=q^j$ for all $j$ in Corollary~\ref{cor:s321} we have $A_{n,n}(q,q^2,q^3,\ldots)=\sum_{\pi\in\S_n(321)}q^{\maj{\pi}}$, and so the recurrence in Corollary~\ref{cor:321rec} can be used to compute
this polynomial. A different and arguably more complicated recurrence is given in \cite[Thm. 6.2]{CEKS}. It would be interesting to find a simple formula enumerating permutations in $\S_n(321)$ with a given major index in the spirit of Theorem~\ref{thm:maj}. A helpful tool might be the bijection between $\S_n(321)$ and $\D_n$ described in the proof of Theorem~\ref{thm:s321}, which maps the statistic $\maj$ on $\S_n(321)$ to the sum of the peak positions of peaks in the first half of the corresponding Dyck path.

\section{Appendix: A non-bijective proof of Theorem~\ref{thm:maj}}\label{sec:Stanley}

In this appendix we discuss a non-bijective proof of Theorem~\ref{thm:maj} that was communicated to us by Richard Stanley.

As discussed in Sections~\ref{sec:des} and~\ref{sec:123}, the Robinson-Schensted correspondence gives a descent-set-preserving bijection between $\I_n(k\dots21)$ and the set $\SYT_n^{k-1}$ of standard Young tableaux with $n$ boxes at most $k-1$ rows. Recall that the major index of a standard Young tableaux is defined as the sum of its descents. It follows from \cite[Prop. 7.19.11]{EC2} that
\beq\label{eq:shur}\sum_{\pi\in\I_n(k\dots21)} q^{\maj(\pi)}=\sum_{T\in\SYT_n^{k-1}} q^{\maj(T)}=(1-q)(1-q^2)\cdots(1-q^n)\,\sum_\lambda s_\lambda(1,q,q^2,\ldots),\eeq
where $\lambda$ ranges over all partitions of $n$ with at most $k-1$ parts, and $s_\lambda$ denotes a Schur function.
In the case $k=3$, if follows from \cite[Ex. 7.16a]{EC2} (see also~\cite{BN}) that
$$\sum_\lambda s_\lambda=h_{\fn2} h_{\cn2},$$
where now the sum if over partitions of $n$ with at most $2$ parts, and $h_m=\sum_{1\le i_1\le i_2\le \dots\le i_m} x_{i_1}x_{i_2}\dots x_{i_m}$ denotes a complete homogeneous symmetric function.
Since $$h_m(1,q,q^2,\dots)=\frac{1}{(1-q)(1-q^2)\dots(1-q^m)},$$ Equation~\eqref{eq:shur} gives
$$\sum_{\pi\in\I_n(321)} q^{\maj(\pi)}=\frac{(1-q)(1-q^2)\dots(1-q^n)}{(1-q)(1-q^2)\dots(1-q^{\fn2})(1-q)(1-q^2)\dots(1-q^{\cn2})}=\binom{n}{\fn2}_q,$$
recovering Theorem~\ref{thm:maj}.

Although this non-bijective method cannot be used to prove the more general Theorem~\ref{thm:main}, it can in principle be extended to enumerate $k\dots21$-avoiding involutions for larger values of $k$. 

\subsection*{Acknowledgements}

The first two authors were partially supported by University of Bologna, funds for selected research topics, and by P.R.I.N. of M.I.U.R., Italy. The third author was partially supported by grant DMS-1001046 from the NSF,
by grant \#280575 from the Simons Foundation,
and by grant H98230-14-1-0125 from the NSA. We are grateful to Richard Stanley for providing the non-bijective proof in Section~\ref{sec:Stanley}. Finally, we point out that, after submission of this paper, a different proof of Theorem~\ref{thm:maj} has been found by Dahlberg and Sagan~\cite{DSagan}.

\end{document}